\newtheorem{prop}{Proposition}[section]
\newtheorem{theo}[prop]{Theorem}
\newtheorem{ex}[prop]{Example}
\newtheorem{rem}[prop]{Remark}
\newtheorem{lem}[prop]{Lemma}
\newenvironment{proof}
 {\begin{trivlist} \item[\hskip \labelsep {\bf Proof}\hspace*{3 mm}]}
 {\hfill$\Box$\end{trivlist}}
\newcommand{\R}{\mathbb{R}}
\newcommand{\sgn}{\operatorname{sgn}}
\def \bx {{\bf x}}
\def \bz {{\bf z}}
\def \bn {{\bf n}}
\def \bs {{\bf s}}
\def \bt {{\bf t}}
\def \Evx {\mathcal{E}(\bx)}
\newcommand{\te}{\theta}
\begin{document}

\title{Helicoidal surfaces of frontals in  Euclidean space as deformations of surfaces of revolution, with singularities}
\author{Luciana F. Martins  and
Samuel P. dos Santos \footnote {Work  supported by process no.  2022/10370-9, Fundação de Amparo à Pesquisa do Estado de São Paulo (FAPESP).} }

\maketitle
 \begin{abstract}
We investigate helicoidal surfaces in three-dimensional Euclidean space whose profile curves are frontals. Using the framework of Legendre curves and framed surfaces, we establish conditions under which helicoidal surfaces generated by frontals are themselves frontals or fronts. We then derive curvature expressions in terms of the invariants of the generating Legendre curve. Our study extends classical results on parallel and focal surfaces of surfaces of revolution to the helicoidal setting. In particular, we show that both parallel and focal surfaces of a helicoidal surface are helicoidal, with their generating curves arising from one-parameter deformations of the corresponding parallel and evolute curves. We prove that singularities of these curves persist under such deformations, revealing geometric rigidity and stability of singularities. Finally, we examine the behavior of the Gaussian and mean curvatures near singular points of helicoidal surfaces.
\end{abstract}

\renewcommand{\thefootnote}{\fnsymbol{footnote}}
\footnote[0]{ 2020 Mathematics Subject classification. 
58K05,  57R45, 53A05.} \footnote[0]{Keywords and Phrase.
Helicoidal surface, framed surface, Legendre curve,  surface of revolution, frontal }

\section{Introduction}

The study of helicoidal surfaces with singularities in 3-dimensional Euclidean space arises  naturally as a generalization of the classical theory of surfaces of revolution. These surfaces have recently attracted considerable attention (see, for example, \cite{Ha-Ho-Mo,msst,Na-Sa-Shi-Ta,TT}). Also known as generalized helicoids, they form a broader class that includes both helicoids and surfaces of revolution as special cases. 

Let \( l \) be a line lying in a plane \( \Pi \) in \( \mathbb{R}^3 \), and let \( \mathcal{C} \) be a curve contained in \( \Pi \). Suppose \( \mathcal{C} \) undergoes a combined motion in \( \mathbb{R}^3 \): it rotates around \( l \) while simultaneously translating parallel to \( l \), with the rate of translation proportional to the rate of rotation. The resulting set of points \( \mathcal{M} \) is called a \textit{generalized helicoid} (or \textit{helicoidal surface}) generated by \( \mathcal{C} \), which is referred to as the \textit{profile curve} of \( \mathcal{M} \). The line \( l \) is referred to as the \textit{axis} of \( \mathcal{M} \), and the ratio of the translation speed to the rotational speed is called the \textit{slant} of \( \mathcal{M} \). A surface of revolution corresponds to a helicoidal surface with slant $0$ (cf. \cite{Gray}), while the classical helicoid is a helicoidal surface whose profile curve is a straight line segment. 

In a surface of revolution, the geometry arises  purely from the circular motion of the profile curve around a fixed axis. In contrast, helicoidal surfaces result from a helical motion, involving both rotation and translation along the axis, producing a screw-like geometry. 

Helical motions, which generate helicoidal surfaces, are a particular class of screw motions as formalized in the foundational work of Ball (\cite{Ball}). Screw theory has become  a tool in fields such as robotics, multibody mechanics, computational geometry, and structural kinematics, where the analysis of rigid body motions requires the interplay of rotation and translation. From a geometric perspective, helicoidal surfaces provide a natural setting to investigate how such coupled motions influence curvature properties, simmetry, and the appearance of singularities.

In this paper, we investigate  helicoidal surfaces with singular points.  A frontal is a class of curves or surfaces that may admit singularities, and over the past decades, frontals have been extensively studied from the perspective of differential geometry. In our setting,  we consider frontals as singular plane curves (more precisely, Legendre curves, whose curvatures are given in \cite{FuTa-lc1}), and we treat the associated helicoidal surfaces as singular surfaces modeled by framed base surfaces,  whose basic invariants are introduced in  \cite{FuTa-Fs}. 

In Section 2, we review the theories of Legendre curves in the unit tangent bundle of the Euclidean plane $\R^2$, as well as the theory of framed surfaces in the Euclidean space $\R^3$. We also recall the definitions of parallel and evolute curves of Legendre curves, along with parallel and focal surfaces of  framed surfaces, with will play role in subsequent sections. 

Unlike surfaces of revolution, which are always frontals  when their profile curves are frontals (\cite{TT}), helicoidal surfaces require an additional condition to be considered frontals. In Section 3, we establish a condition under which helicoidal surfaces are themselves frontals, and we analyse when such surfaces are in fact fronts. Under this condition, the resulting helicoidal surface becomes a framed base surface. 
In Section 4, we give the curvatures and basic invariants of helicoidal surfaces by using the curvature of  the generating Legendre curves. 

Sections 5 and 6 are devoted to the study of the relationship between a helicoidal surface and associated parallel and focal surfaces.
It is well known that, for regular surfaces of revolution generated by a regular curve $\gamma$, the  parallel  surface is itself a surface of revolution, generated by a parallel curve of $\gamma$. This property also holds for surfaces of revolution generated by Legendre curves. For focal surfaces, a similar behavior occurs: one of the focal surfaces  coincides with  the surface of revolution generated by the evolute of  the profile curve, while the other  degenerates, lying entirely along the  axis of rotation (\cite{TT}).  

In this work, we extend this geometric viewpoint to helicoidal surfaces, where the additional twist introduced by the helicoidal motion leads to more intricate behaviors.
We show in Theorem \ref{theo:paralHelic} that  parallel surfaces of a helicoidal surface are also  helicoidal surfaces. However, their generating curve are no longer a simple radial offset of the original curve, as is the case of surfaces of revolution. Instead, it results in a 1-parameter deformation that depends on the twisting nature of the helicoidal motion,  encoded in the slant parameter of the helicoidal surface. This deformation process is further analyzed in Theorem~\ref{theo:def-par}, where we show that singularities of the original parallel curve persist under the helicoidal deformation, revealing a form of geometric rigidity of singular points.

Section~6 focuses on  the focal surfaces of a helicoidal surface. Proposition~\ref{prop:focal:gen} establishes that one of the focal surfaces of a helicoidal surface is itself helicoidal, with a space profile curve explicitly determined also by the invariants of the generating Legendre curve. Building on this and on the construction of plane profile curves from Section~5, we describe in Remark~\ref{rem:Lem} a  family of plane curves  associated with the focal surface. This family defines a 1-parameter deformation of the evolute of the profile curve. In Theorem~\ref{theo:def-foc}, we prove that, under suitable assumptions, the singularities of the evolute persist under this deformation. This result reveals a geometric stability of singularities in the focal geometry of helicoidal surfaces.

 Finally, in Section 7, we investigate the boundedness of  the Gaussian and  mean curvatures of helicoidal surfaces near singular points.

A natural question regarding helicoidal surfaces is the characterization of their singularities. This was announced in \cite{Na-Sa-Shi-Ta} by N. Kakatsuyama, K. Saji, R. Shimada, and M. Takahashi, where the authors study helicoidal surfaces by considering them as generalized framed base surfaces (as in \cite{TY}) and then use this framework to obtain results such as the basic invariants and curvatures, after showing that the helicoidal surface is also a framed base surface under a mild condition (Proposition 3.3). Criteria for the singularities of helicoidal surfaces is given in Theorem 4.3, and as a consequence, the authors show that certain types of singular points cannot occur. We believe that our paper and theirs were developed simultaneously and independently, and are complementary to each other.

Parts of this work were presented at the 7th International Workshop on Singularities in Generic Geometry and Applications, held in Fortaleza, Brazil, in 2023, and at the 18th International Workshop on Real and Complex Singularities, held in Valencia, Spain, in 2024. The first workshop was organized in honor of Farid Tari, and the second in honor of Juan José Nuño-Ballesteros, both celebrating their 60th birthdays. We dedicate this work to these two outstanding mathematicians, whose contributions have greatly enriched the field.

\section{Preliminaries}
\label{sec:preli}

In this paper, all maps and manifolds are differentiable of class \( C^\infty \) unless stated otherwise. Let \( \mathbb{R}^n \) denote the \( n \)-dimensional Euclidean space equipped with the inner product \( \mathbf{a} \cdot \mathbf{b} = a_1b_1 + \dots + a_nb_n \), where \( \mathbf{a} = (a_1, \dots, a_n) \) and \( \mathbf{b} = (b_1, \dots, b_n) \). We shall focus on the cases where \( n = 2 \) or \( n = 3 \). The norm of \( \mathbf{a} \) is given by \( |\mathbf{a}| = \sqrt{\mathbf{a} \cdot \mathbf{a}} \), and for \( n = 3 \), the vector product of \( \mathbf{a} \) and \( \mathbf{b} \) is denoted, as usual, by \( \mathbf{a} \times \mathbf{b} \).

We quickly review below main definitions and properties of Legendre curves in the Euclidean plane  $\R^2$ (cf. \cite{FuTa-lc1,FuTa-lc2}) and framed surfaces in the  Euclidean space $\R^3$ (cf. \cite{FuTa-Fs}). 

\subsection{Legendre curves} \label{sec:leg-curves}

Let \( I \subset \mathbb{R} \) be an open interval, and let \( S^1 \) be the unit circle in $\R^2$. Let \( \gamma: I \to \mathbb{R}^2 \) and \( \nu: I \to S^1 \) be  smooth mappings. We say that \( (\gamma, \nu): I \to \mathbb{R}^2 \times S^1 \) is a {\it Legendre curve} if \( \dot{\gamma}(t) \cdot \nu(t) = 0 \) for all \( t \in I \), where \( \dot{\gamma}(t) = (d\gamma/dt)(t) \). A point \( t_0 \in I \) such that \( \dot{\gamma}(t_0) = 0 \) is called a {\it singular point} of \( \gamma \). If a Legendre curve $ (\gamma, \nu)$ is an immersion, it is called a {\it Legendre immersion}. We say that \( \gamma: I \to \mathbb{R}^2 \) is a {\it frontal} (resp. {\it front}) if there exists a smooth map \( \nu: I \to S^1 \) such that \( (\gamma, \nu): I \to \mathbb{R}^2 \times S^1 \) is a Legendre curve (resp. Legendre immersion).

Given a Legendre curve $(\gamma, \nu): I \to \mathbb{R}^2 \times S^1$, let $J$ be the anticlockwise rotation by angle $\pi/2$ in $\mathbb{R}^2$, and let $\mu(t) = J(\nu(t))$. Then $\{\nu(t), \mu(t)\}$ is a moving frame along the frontal $\gamma(t)$ in $\mathbb{R}^2$, and we have a Frenet-type formula given by

\begin{equation}\label{eq:LegCurvatures}
	\left( \begin{array}{l}
		\dot{\nu}(t)\\ \dot{\mu}(t)
	\end{array}\right)= \left( \begin{array}{cc}
		0 & \ell(t)\\ -\ell(t) & 0 
	\end{array}\right)  \left( \begin{array}{l}
		\nu(t) \\ \mu(t)
	\end{array}\right), \quad \dot{\gamma}(t) = \beta(t) \mu(t),
\end{equation}
where $\ell(t) = \dot{\nu}(t) \cdot \mu(t)$ and $\beta(t) = \dot{\gamma}(t) \cdot \mu(t)$. The pair $(\ell, \beta)$ is called the {\it curvature of the Legendre curve} $(\gamma, \nu)$. It holds that a Legendre curve $(\gamma, \nu)$ is a Legendre immersion if and only if $(\ell,\beta) \neq (0,0)$. Notice that $\gamma$ is singular at $t$ if and only if $\beta(t) = 0$.

Let $(\gamma, \nu): I \rightarrow \mathbb{R}^2 \times S^1$ be a Legendre curve. Given $\lambda \in \R$, the {\it parallel curve} $\gamma_\lambda: I \rightarrow \mathbb{R}^2$ of $\gamma$ is defined by  
$\gamma_\lambda (t) = \gamma(t) + \lambda \nu(t)$, and we note that $(\gamma_\lambda, \nu)$ is also a Legendre curve. In a neighborhood of a regular point of $\gamma$, the evolute of $\gamma$  is given by $\mathcal{E}_{\gamma}(t) = \gamma(t) + \frac{1}{\kappa(t)} {\bf n}(t)$, away from points where $\kappa(t) = 0$, where $\kappa(t)$ is the usual curvature of a regular plane curve and ${\bf n}(t) = J(\dot{\gamma}(t)/|\dot{\gamma}(t)|) = - \sgn(\beta(t)) \nu(t)$.
Since the curvature may diverge at singular points of $\gamma$, to propertly address such points we regard $(\gamma, \nu)$ as a Legendre immersion and define its  {\it evolute}   $\mathcal{E}_{\gamma}: I \to \mathbb{R}^2$  by  $\mathcal{E}_{\gamma}(t) = \gamma(t) -  \frac{\beta(t)}{\ell(t)} \nu(t)$, according to Theorem 3.3 of \cite{FuTa-lc2}. This definition generalizes the classical evolute of regular plane curves, since at regular points of $\gamma$ we have $\ell(t)=|\beta(t)| \kappa(t)$ (Lemma 3.1 of \cite{FuTa-lc2}), and we assume throughout that $\kappa(t) \neq 0$.  

Given $\lambda \in \R$, it holds that the parallel curve $\gamma_{\lambda}$ of $\gamma$ is singular at $t_0 \in I$ if and only if either $\gamma$ is singular at $t_0$, or $\gamma$ is regular at $t_0$ and one of the following holds: $\lambda = 1/\kappa(t_0)$ if $\nu (t_0)= {\bf n}(t_0)$, or $\lambda = - 1/\kappa(t_0)$ if  $\nu(t_0) = - {\bf n}(t_0)$ (for details, see Proposition 2.8 of \cite{FuTa-lc2}, where both cases $\nu = \pm {\bf n}$ must the considered).  Consequently,   if $\gamma$ is regular at $t_0$, then  $\gamma_{\lambda}$ is singular at $t_0$ if and only if $\lambda = - \beta(t_0)/ \ell (t_0)$.

\subsection{Framed surfaces} \label{sec:framed-surf}

Considering Legendre curves, one has an orthonormal frame for $\mathbb{R}^2$ along the curve. The same approach holds for surfaces in $\mathbb{R}^3$. Let $U$ be a simply connected domain of $\mathbb{R}^2$, and let $S^2$ be the unit sphere in $\mathbb{R}^3$. We denote the 3-dimensional smooth manifold $\{(\mathbf{a}, \mathbf{b}) \in S^2 \times S^2; \, \mathbf{a} \cdot \mathbf{b} = 0\}$ by $\Delta$.

We say that $(\mathbf{x}, \mathbf{n}, \mathbf{s}): U \rightarrow \mathbb{R}^3 \times \Delta$ is a {\it framed surface} if $ \mathbf{x}_u(u,v) \cdot \mathbf{n}(u,v) = 0$ and $\mathbf{x}_v(u,v) \cdot \mathbf{n}(u,v) = 0$ for all $(u,v) \in U$, where $\mathbf{x}_u = \partial \mathbf{x} / \partial u$ and $\mathbf{x}_v = \partial \mathbf{x} / \partial v$. We say that $\mathbf{x}: U \to \mathbb{R}^3$ is a {\it framed base surface} if there exists $(\mathbf{n}, \mathbf{s}): U \rightarrow \Delta$ such that $(\mathbf{x}, \mathbf{n}, \mathbf{s})$ is a framed surface. A  framed surface $(\mathbf{x}, \mathbf{n}, \mathbf{s})$ is a {\it framed immersion} if $(\mathbf{x}, \mathbf{n}, \mathbf{s})$ is an immersion.  
A point $p \in U$ is a {\it singular point} of $\mathbf{x}$ if $\mathbf{x}$ is not an immersion at $p$.

The pair $(\bx, \bn) : U \to \R^3 \times S^2$ is said to be a {\it Legendre surface} if $\bx_u(u,v) \cdot \bn(u,v) = 0$ and $\bx_v(u,v) \cdot \bn(u,v) = 0$ for all $(u, v) \in U$. Moreover, when a  Legendre  surface $(\bx, \bn)$ is an immersion, this is called a {\it Legendre immersion}. We say that $\bx : U \to \R^3$ is a {\it frontal} (respectively, a {\it front}) if there exists a map $\bn : U \to S^2$ such that the pair $(\bx, \bn) : U \to \R^3 \times S^2$ is a Legendre surface (respectively, a Legendre immersion). Then, a  framed base surface is a frontal. At least locally, a frontal is a framed base surface.

Let $\bx : U \to \R^3$ be a frontal. The function $\Lambda:U\to\mathbb{R}$ given by $\Lambda(u,v)=\det(\bx_u,\bx_v,\bn)(u,v)$ is called the \textit{signed area density}. A singular point $p $ of a frontal is called  \textit{non-degenerate} if \(d\Lambda (p) \neq 0\).  In this case, the singular set of the frontal is a regular curve near $p$, called {\it singular curve}. A singular point $p \in U$ of a map $f: U \rightarrow \R^3$ is called a {\it cuspidal edge} if the map-germ $f$ at $p$ is $\mathcal{A}$-equivalent to $(u,v) \mapsto (u, v^2, v^3)$ at $0$. (Two map-germs $f_1,f_2: (\R^n,0) \rightarrow (\R^m,0)$ are $\mathcal{A}$-equivalent if there exist diffeomorphisms $S:(\R^n,0) \rightarrow (\R^n,0) $ and $T: (\R^m,0)  \rightarrow (\R^m,0)$ such that $f_2 \circ S = T \circ f_1$.) A singular point $p$ of $f$ is a {\it swallowtail} if $f$ at $p$ is $\mathcal{A}$-equivalent to $(u, u^2v + 3u^4, 2uv + 4u^3)$ at $0$.     Cuspidal edges and swallowtails are examples of non-degenerate singular points of frontals. A non-degenerate singular point $p$ of a frontal $f$ is called of the \textit{first kind} if the kernel of the differential of  $f$ at $p$ is not parallel to the singular curve.  Cuspidal edges are  examples of non-degenerated singular points of frontals of the first kind.

Let  $(\bx, \bn , \bs): U\rightarrow\R^3 \times \Delta$ be a framed surface  and $\bt(u,v) = \bn(u,v)\times\bs(u,v)$. Then $\{\bn(u,v),\bs(u,v),\bt(u,v)\}$ is a positive orthogonal moving frame along $\bx(u,v)$. Therefore, we have the following systems of differential equations:  
\begin{equation}\label{eq:g-framed-surface}
	\left(\begin{array}{cc}
		\bx_u  \\
		\bx_v                    
	\end{array}\right) = 
	\left(\begin{array}{cc}
		a_1  &  b_1\\
		a_2  &  b_2
	\end{array}\right)
	\left(\begin{array}{cc}
		\bs     \\
	\bt   
	\end{array}\right),
\end{equation}

\begin{equation}\label{eq:f-framed-surface}
	\left(\begin{array}{c}
		\bn_u  \\
		\bs_u   \\
		\bt_u
	\end{array}\right) = 
	\left(\begin{array}{ccc}
		0     &  e_1  & f_1 \\
		-e_1  &  0    & g_1 \\
		-f_1  &  -g_1 & 0
	\end{array}\right)
	\left(\begin{array}{c}
		\bn   \\
		\bs    \\
		\bt
	\end{array}\right), 
	\left(\begin{array}{c}
		\bn_v  \\
		\bs_v   \\
		\bt_v
	\end{array}\right) = 
	\left(\begin{array}{ccc}
		0     &  e_2  & f_2 \\
		-e_2  &  0    & g_2 \\
		-f_2  &  -g_2 & 0
	\end{array}\right)
	\left(\begin{array}{c}
		\bn   \\
		\bs    \\
		\bt
	\end{array}\right),
\end{equation}
where $a_i,b_i e_i,f_i,g_i: U\rightarrow\R$, $i=1,2$, are smooth functions. These functions are called {\it basic invariants} of the framed surface $(\bx,\bn,\bs)$. The square matrices given in  \eqref{eq:g-framed-surface}  and \eqref{eq:f-framed-surface} are denoted by $\mathcal{G}, \mathcal{F}_1$ and $\mathcal{F}_2$, respectively, and the triple $(\mathcal{G},\mathcal{F}_1,\mathcal{F}_2)$ is also called {\it basic invariants} of the framed surface $(\bx,\bn,\bs)$. These invariants satisfy the following conditions:

\begin{equation}\label{eq:int-fs}
	\left\{\begin{array}{l}
		{a_1}_v-b_1 g_2={a_2}_u-b_2 g_1 \\ {b_1}_v-a_2 g_1={b_2}_u-a_1 g_2 \\ a_1 e_2+b_1 f_2=a_2 e_1+b_2 f_1\end{array}\right. ,\ \ \ 
		\left\{\begin{array}{l}
		{e_1}_v-f_1 g_2={e_2}_u-f_2 g_1 \\ {f_1}_v-e_2 g_1={f_2}_u-e_1 g_2 \\ {g_1}_v-e_1 f_2={g_2}_u-e_2 f_1\end{array}\right.
\end{equation}
and  ${\mathcal{F}_2}_u - {\mathcal{F}_1}_v = \mathcal{F}_1\mathcal{F}_2 - \mathcal{F}_2\mathcal{F}_1$. 

Let $(\mathcal{G},\mathcal{F}_1,\mathcal{F}_2)$ be the basic invariants of $(\bx, \bn , \bs)$. The smooth mapping  $C_F = (J_F,K_F,H_F):U\rightarrow\R^3$ given by
\begin{equation*}
	\begin{aligned}
		J_F & =\operatorname{det}\left(\begin{array}{ll}
			a_1 & b_1 \\
			a_2 & b_2
		\end{array}\right), \quad K_F=\operatorname{det}\left(\begin{array}{ll}
			e_1 & f_1 \\
			e_2 & f_2
		\end{array}\right), \\
		H_F & =-\frac{1}{2}\left[\operatorname{det}\left(\begin{array}{ll}
			a_1 & f_1 \\
			a_2 & f_2
		\end{array}\right)-\operatorname{det}\left(\begin{array}{ll}
			b_1 & e_1 \\
			b_2 & e_2
		\end{array}\right)\right] 
	\end{aligned}
\end{equation*}
is called the {\it curvature of the framed surface}.
We notice that if $\bx$ is a regular surface, then  their Gaussian curvature $K$ and mean curvature $H$ satisfy $K=K_F/J_F$ and $H=H_F/J_F$.

The smooth mapping $I_F:U\rightarrow\R^8$ given by
	\begin{equation*}
	\begin{gathered}
		I_F=\left(C_F, \, \operatorname{det}\left(\begin{array}{ll}
			a_1 & g_1 \\
			a_2 & g_2
		\end{array}\right), \, \operatorname{det}\left(\begin{array}{ll}
			b_1 & g_1 \\
			b_2 & g_2
		\end{array}\right),\right. \\
		\left.\operatorname{det}\left(\begin{array}{ll}
			e_1 & g_1 \\
			e_2 & g_2
		\end{array}\right), \, \operatorname{det}\left(\begin{array}{ll}
			f_1 & g_1 \\
			f_2 & g_2
		\end{array}\right), \, \operatorname{det}\left(\begin{array}{ll}
			a_1 & e_1 \\
			a_2 & e_2
		\end{array}\right)\right) 
	\end{gathered}
	\end{equation*}
is called the {\it concomitant mapping} of the framed surface $(\bx, \bn, \bs)$.

\begin{prop}\label{prop:FuTa-Fs} {\rm(\cite{FuTa-Fs}) } Let $(\bx, \bn , \bs): U\rightarrow\R^3 \times \Delta$ be a framed surface and $p \in U$.
	\begin{enumerate}
		\item[\rm (a)] $\bx$  is an immersion (a regular surface) around $p$ if and only if $J_F ( p) \neq 0$.
		\item[\rm (b)]  $(\bx,\bn)$ is a Legendre immersion around $p$ if and only if $C_F(p) \neq  0$.
	\item[\rm (c)] $(\bx, \bn , \bs)$ is a framed immersion around $p$ if and only if $I_F(p) \neq 0$.
	\end{enumerate}
\end{prop}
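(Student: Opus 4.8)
The plan is to write the differentials of $\bx$, of $(\bx,\bn)$, and of $(\bx,\bn,\bs)$ in the orthonormal moving frame $\{\bn,\bs,\bt\}$ and so reduce each ``immersion at $p$'' statement to the non-vanishing of some $2\times 2$ minor of an explicit matrix built from the basic invariants; passing from ``at $p$'' to ``around $p$'' is then the standard remark that immersivity is an open condition while $J_F$, $C_F$, $I_F$ are continuous. For (a) I would compute from \eqref{eq:g-framed-surface} that $\bx_u\times\bx_v=(a_1b_2-a_2b_1)(\bs\times\bt)=J_F\,\bn$, so (using $|\bn|=1$) $\bx$ is an immersion at $p$ exactly when $J_F(p)\neq 0$.

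For (b): since $\bx_u,\bx_v,\bn_u,\bn_v$ all lie in $\operatorname{span}\{\bs,\bt\}$, the frame representation of $d(\bx,\bn)_p$ is the $2\times4$ matrix $M$ whose $i$-th row is $(a_i,b_i,e_i,f_i)$, so $(\bx,\bn)$ is an immersion at $p$ iff $\operatorname{rank}M(p)=2$, i.e.\ iff one of its six $2\times2$ minors is nonzero. I would organize these six minors as follows: two of them are $J_F=\det P$ and $K_F=\det Q$ with $P=\left(\begin{smallmatrix}a_1&a_2\\ b_1&b_2\end{smallmatrix}\right)$ and $Q=\left(\begin{smallmatrix}e_1&e_2\\ f_1&f_2\end{smallmatrix}\right)$, and the other four are the entries of $R:=P\,\Omega\,Q^{\top}$ with $\Omega=\left(\begin{smallmatrix}0&1\\ -1&0\end{smallmatrix}\right)$. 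Then I would verify three identities: $\det R=\det P\,\det Q=J_FK_F$; the interchangeability relation $a_1e_2+b_1f_2=a_2e_1+b_2f_1$ of \eqref{eq:int-fs} says precisely $\operatorname{tr}R=0$; and $H_F=0$ is precisely the symmetry of $R$. Granting these, if $C_F(p)\neq0$ then $J_F\neq0$ or $K_F\neq0$, or else $H_F\neq0$ and then $R$ is not symmetric so one of the off-diagonal entries of $R$ is nonzero; and conversely if $C_F(p)=0$ then $R$ is symmetric, trace-free, and $\det R=0$, hence $R=0$, which together with $J_F=K_F=0$ makes all six minors vanish.

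For (c): likewise the frame representation of $d(\bx,\bn,\bs)_p$, after discarding columns that are identically zero or proportional to another, is the $2\times5$ matrix $N$ with $i$-th row $(a_i,b_i,e_i,f_i,g_i)$, so $(\bx,\bn,\bs)$ is an immersion at $p$ iff $\operatorname{rank}N(p)=2$. Its ten $2\times2$ minors are $J_F$, $K_F$, the four entries of $R$, and $\det\left(\begin{smallmatrix}a_1&g_1\\ a_2&g_2\end{smallmatrix}\right),\ \det\left(\begin{smallmatrix}b_1&g_1\\ b_2&g_2\end{smallmatrix}\right),\ \det\left(\begin{smallmatrix}e_1&g_1\\ e_2&g_2\end{smallmatrix}\right),\ \det\left(\begin{smallmatrix}f_1&g_1\\ f_2&g_2\end{smallmatrix}\right)$; comparing with the definition of $I_F$, these last four determinants, together with one entry of $R$ (namely $\det\left(\begin{smallmatrix}a_1&e_1\\ a_2&e_2\end{smallmatrix}\right)$) and the three components of $C_F$, are exactly the eight components of $I_F$. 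Hence $I_F(p)\neq0$ produces a nonzero minor of $N$ by the same argument as in (b), so $\operatorname{rank}N(p)=2$; and if $I_F(p)=0$ then $C_F(p)=0$, so by part (b) already $R=0$ and $J_F=K_F=0$, while the four $g$-determinants vanish by hypothesis, so all ten minors are zero and $(\bx,\bn,\bs)$ is not an immersion at $p$.

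The only point I expect to need genuine care, as opposed to bookkeeping, is the converse in (b): the vanishing of $J_F$, $K_F$, $H_F$ does not on its own force the mixed minors $a_1e_2-a_2e_1$, $a_1f_2-a_2f_1$, $b_1e_2-b_2e_1$, $b_1f_2-b_2f_1$ to vanish. What rescues the argument is the integrability identity in \eqref{eq:int-fs}, which is automatic for a framed surface and which I would exploit in the form $\operatorname{tr}R=0$, combined with the elementary fact that a symmetric, trace-free $2\times2$ matrix with zero determinant is the zero matrix. Everything else is assembling minors, with parts (a) and (c) reducing to (b) plus the open-condition remark for the ``around $p$'' phrasing.
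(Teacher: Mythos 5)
Your proof is correct. Note that the paper itself gives no proof of this proposition---it is quoted from \cite{FuTa-Fs}---so there is nothing internal to compare against; your argument (writing the differentials of $\bx$, $(\bx,\bn)$ and $(\bx,\bn,\bs)$ in the frame $\{\bn,\bs,\bt\}$, reducing each immersion condition to the non-vanishing of a $2\times2$ minor, and, for the converse in (b), using the integrability relation $a_1e_2+b_1f_2=a_2e_1+b_2f_1$ to get $\operatorname{tr}R=0$ so that $J_F=K_F=H_F=0$ forces the symmetric, trace-free matrix $R$ with $\det R=J_FK_F=0$ to vanish) is essentially the standard one from that reference, and you correctly identified the only delicate step.
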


Next, we consider parallel  and focal surfaces of framed surfaces. For a framed surface $(\bx, \bn , \bs): U\rightarrow\R^3 \times \Delta$, a {\it parallel surface} $\bx^\lambda : U \to \R^3$ is given by $\bx^\lambda (u, v) = \bx(u, v) + \lambda \bn(u, v)$, where $\lambda \in \R$. It holds that $(\bx^\lambda, \bn , \bs)$ is a framed surface. A {\it focal surface} of the framed surface is given by $\Evx : U \to \R^3$, with  $\Evx (u, v) = \bx(u, v) + \lambda \bn(u, v)$, where $\lambda$ is a solution of the equation 
\begin{equation}\label{eq:solutionFS}
 K_F(u,v)\lambda^2 - 2H_F(u,v)\lambda + J_F(u,v) = 0. 
\end{equation}  
We note that these definitions are generatizations of those ones for regular surfaces. For instance, in a neigborhood of a regular point of $\bx$,  it holds that $\lambda$ is a solution of \eqref{eq:solutionFS} if and only if $1/\lambda$ is a solution of $\lambda^2 - 2H\lambda + K = 0$. Focal surfaces of a Legendre surface are also Legendre surfaces. The study of parallel and focal surfaces of fronts  from a different viewpoint is given in \cite{Tera-parallel, Tera-principal, Tera-focal}.
For more details on framed surfaces, including existence and uniqueness theorems, see \cite{FuTa-Fs}.

\section{Helicoidal surfaces of frontals}

Let $I\subset \R$ be an open interval  and $(\gamma, \nu):  I\to\R^2$ be a Legendre curve with curvature  $(\ell, \beta)$. 
In the following we identify $\R^2$ with the $xz$-plane  into $xyz$-space $\R^3$ and we deal with the plane curve $\gamma (t) = (x(t), z(t))$ as a curve in the $xz$-plane. Considering the moving frame $\{\nu(t), \mu(t) \}$ along $\gamma(t)$, let $\nu(t)= (a(t),b(t))$. Then  $\dot{x}(t)a(t) + \dot{z}(t)b(t) = 0$, $a^2(t) + b^2(t) = 1$ and  $\mu(t) = (-b(t),a(t))$. Let $\varphi: I \rightarrow \R$ be a smooth function such that $a(t)= \cos \varphi (t)$ and $b(t) = \sin \varphi (t)$. Then, by \eqref{eq:LegCurvatures}, it holds that
$\dot\gamma (t) = (\dot{x}(t),\dot{z}(t)) = \beta(t) (-b(t),a(t)) =(-\beta(t)\sin \varphi(t), \beta(t)\cos \,\varphi(t))$, $
\dot{\nu}(t)=(\dot{a}(t), \dot{b}(t)) =  \ell(t) (-b(t), a(t))$ and 
 $\ell(t) = \dot{\varphi}(t)$.

With the above notations, we shall consider helicoidal surfaces with profile curve $\gamma$ and axis being  the $x$-axis or the $z$-axis.
They are surfaces in $\R^3$ which  can be parametrized by $\bx,	{\bf z}: I \times \R \rightarrow \R^3$ given by
	\begin{equation}\label{eq:heligenx}
	{\bf x} (t,\theta)=(c\, \theta +x(t),z(t)\cos \theta ,z(t)\, \sin \theta)
	\end{equation}
and 
	\begin{equation}\label{eq:heligenz}
	{\bf z} (t,\theta)=(x(t)\cos \theta,x(t)\, \sin \theta, c\,\theta + z(t)),
\end{equation}
where $\bx$ (resp. $\bz$) is called the {\it helicoidal surface of $\gamma$ around $x$-axis (resp. $z$-axis)}.

Let us  first consider the helicoidal surface around the  $z$-axis given by \eqref{eq:heligenz}. The first order derivatives of $\bz$ are
$$\begin{array}{ll}
	\bz_t(t,\theta) = \beta(t)(-\sin\varphi(t)\cos\theta,-\sin\varphi(t)\sin\theta,\cos\varphi (t)) \ \ \text{and}\\
	\bz_\theta(t,\theta) = (-x(t)\sin\te,x(t)\cos\te,c).
\end{array}$$
 Then the coefficients of the first fundamental  form of $\bz$ are

\begin{equation*}\label{eq:coeficientesI}
	E(t,\theta)= \beta^2(t), \ \ \  F(t, \theta)=c\,\beta(t)\cos\varphi(t) \ \ \ {\rm and} \ \ \  G(t,\theta)=x^2(t)+c^2
\end{equation*}
and  $\bz$ is singular at $(t, \theta)$ if and only if 

\begin{equation*}\label{eq:regularpart}
	(EG-F^2)(t,\theta)=\beta^2(t)(x^2(t)+c^2)-c^2\beta^2(t)\cos^2\varphi(t)=\beta^2(t)\xi_\bz^2(t)
\end{equation*}
vanishes, where 
\begin{equation}\label{eq:xi:z}
	\xi_\bz(t)=\sqrt{c^2\sin^2\varphi(t)+x^2(t)}\,,
	\end{equation}
 i.e., at singular points of $\gamma$  ($\beta(t)=0$) or when $\xi_\bz(t)=0$. We note that 
$\xi_\bz(t)=0$ if and only if the profile curve intersects  the $z$-axis ($x(t)=0$) and, futhermore,  $\bz$ is a revolution surface ($c=0$) or $\nu(t)$ is orthogonal to the $z$-axis  ($b(t) = \sin \varphi(t)=0$) (see Figure \ref{fig:quadocsiseanula}).

\begin{figure}[h]
	\centering
\vspace*{-.4cm}	\includegraphics[width=10cm]{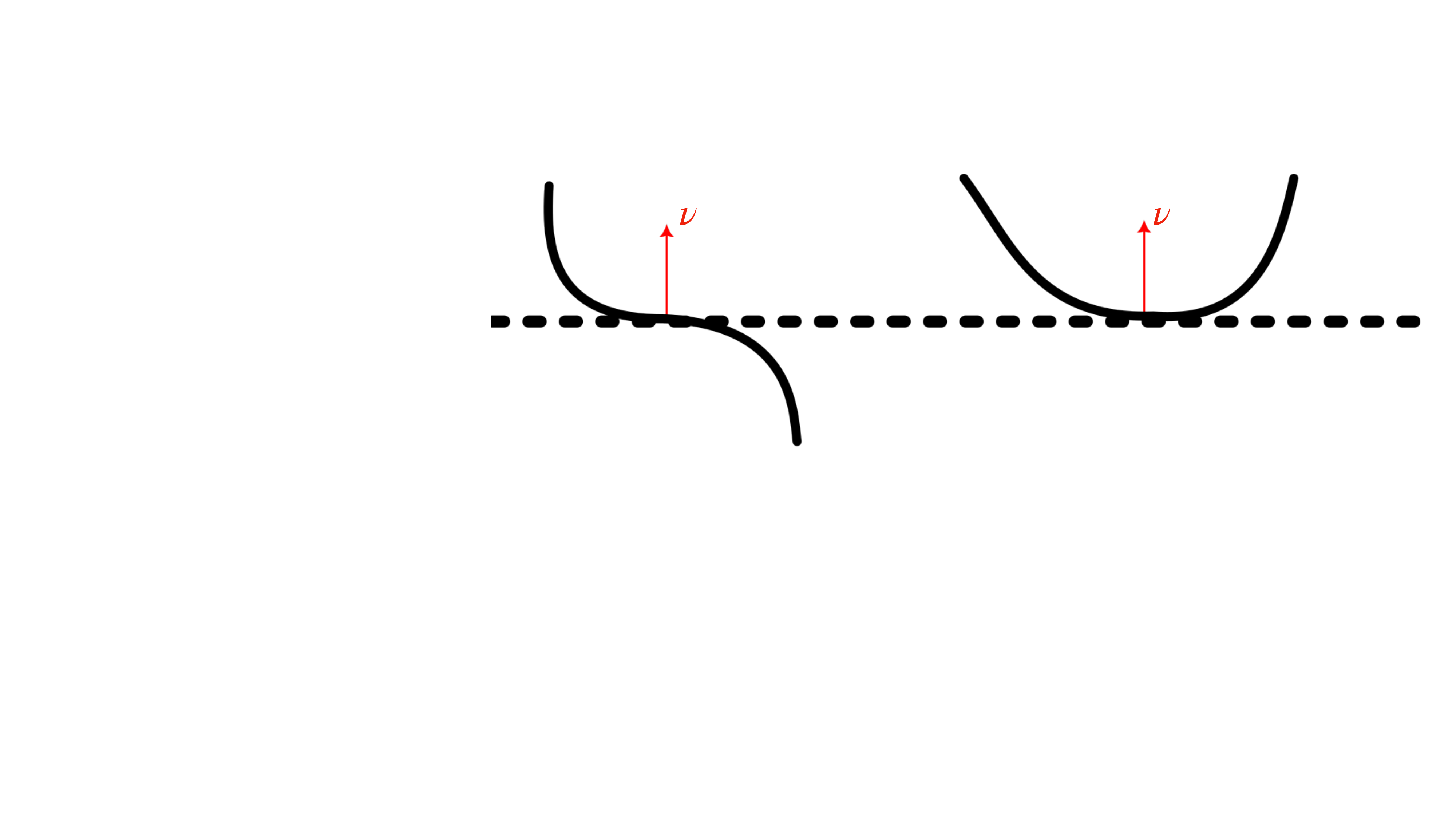}	\hspace*{2cm}	\vspace*{-2cm}
	\caption{Profile curves  of $\bz$ with  $\xi_\bz(t) = 0$, for some $t$. The dashed line is the axis of $\bz$.}
	\label{fig:quadocsiseanula}
\end{figure}


Considering the helicoidal  surface around the  $x$-axis with parametrization given by \eqref{eq:heligenx}, the same conclusion regarding the singular points of ${\bf x}$ holds,  where  
\begin{equation}\label{eq:xi:x}\xi_\bx(t)=\sqrt{c^2\cos^2 \varphi(t)+z^2(t)}\,.
		\end{equation}
	
	We observe that when $c\neq 0$,  either $\xi_\bx(t) \neq 0$ or $\xi_\bz (t) \neq 0$ holds.

A criterion for determining when a surface of revolution is a frontal or a front, based on its profile curve, was established in \cite{TT} (Proposition 3.3). In the result below, we follow a similar strategy, extending it to include surfaces of revolution for completeness and providing additional details.

\begin{prop}\label{prop:curvafrentesupfrente}
	Under the above notation,  the following holds:
	\begin{enumerate}
		\item[\rm (a)]  If   $c=0$, then both revolution surfaces ${\bf z}$  and ${\bf x}$ are frontals. Moreover,   $\bf{z}$  (resp.   $\bf{x}$) is a front if and only if $\gamma$ is a front and  $(x(t),\cos \varphi(t))\neq (0,0)$ (resp.$(z(t), \sin \varphi(t)) \neq (0,0)$), for all $t$. Consequently, either the surface of revolution  $\bz$  or $\bx$  is a front if and only if $\gamma$ is a front.
		\item[\rm (b)] If $c\neq0$  and $\xi_\bz(t) \neq0$ (resp. $\xi_\bx(t) \neq0$), for all $t$,  then the helicoidal surface $\textbf{\rm{\textbf{z}}}$  (resp. $\textbf{\rm{\textbf{x}}}$) is a frontal. Consequently, either the helicoidal surface  $\bf x$ or $\bf z$ is a frontal. 
	Moreover,  $\textbf{\rm{\textbf{z}}}$ (resp. $\textbf{\rm{\textbf{x}}}$)  is a front if and only if  $\gamma$ is a front and  $(x(t),\beta(t))\neq (0,0)$ (resp. $(z(t),\beta(t))\neq (0,0)$), for all $t$.
	\end{enumerate}
\end{prop}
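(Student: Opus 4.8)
The plan is to build, for each surface, an explicit candidate unit normal field $\bn$ and verify that it is orthogonal to both partial derivatives; this exhibits the surface as a Legendre surface, hence a frontal. For the revolution case (a) with $c=0$, the normal for $\bz$ should be the rotation of $\nu$: set $\bn(t,\theta)=(a(t)\cos\theta,\,a(t)\sin\theta,\,b(t))$ where $\nu=(a,b)=(\cos\varphi,\sin\varphi)$. One checks $\bn\cdot\bz_t=\beta(t)(\nu\cdot\mu)=0$ using \eqref{eq:LegCurvatures}, and $\bn\cdot\bz_\theta=0$ is immediate since $\bz_\theta=(-x\sin\theta,x\cos\theta,0)$. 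The analogous normal for $\bx$ is $(a(t),\,b(t)\cos\theta,\,b(t)\sin\theta)$. For the frontness part, $(\bx,\bn)$ (resp. $(\bz,\bn)$) is a front iff $(\bn,\bn_t,\bn_\theta)$ together with the surface parametrization is an immersion of the Legendre lift; computing $\bn_t$ and $\bn_\theta$ and forming the relevant $2\times 2$ or $3\times 3$ minors, one finds that the lift fails to immerse exactly where $\ell(t)=0$ and simultaneously the stated vector $(x(t),\cos\varphi(t))$ (resp. $(z(t),\sin\varphi(t))$) vanishes. Since $\gamma$ is a front iff $(\ell,\beta)\neq(0,0)$, and the degeneracy locus forces $\beta$-free conditions, one reconciles the two to get ``$\gamma$ is a front and $(x,\cos\varphi)\neq(0,0)$.''

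For the helicoidal case (b) with $c\neq0$, the key point is that $\bz_\theta=(-x\sin\theta,x\cos\theta,c)$ now has a nonzero third component, so the revolution normal no longer works; one must take a normal that is a linear combination of the rotated $\nu$ and the rotated $\mu$ (equivalently, of $\bz_t$ and the ``horizontal'' rotation vector), normalized by $\xi_\bz(t)$. Concretely I would look for $\bn$ of the form $\bn=\dfrac{1}{\xi_\bz(t)}\bigl(P(t,\theta),Q(t,\theta),R(t,\theta)\bigr)$ built so that $\bn\cdot\bz_t=0$ and $\bn\cdot\bz_\theta=0$; solving the two linear equations for the coefficients gives a smooth $\bn$ precisely because $\xi_\bz(t)\neq0$ for all $t$ (this is where the hypothesis enters, and it is the crux of why revolution surfaces of frontals are automatically frontals while helicoidal ones are not). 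The ``either $\bx$ or $\bz$ is a frontal'' consequence then follows from the already-noted fact that for $c\neq0$ at each $t$ at least one of $\xi_\bx(t),\xi_\bz(t)$ is nonzero — though strictly this gives frontality of whichever surface has its $\xi$ nonvanishing \emph{everywhere}, so I would phrase the consequence carefully, matching the statement.

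For the front criterion in (b), I would again differentiate the constructed $\bn$ and determine when the Legendre lift $(\bz,\bn):I\times\R\to\R^3\times S^2$ is an immersion, i.e. when the $5\times 2$ Jacobian has rank $2$. The rank drops only if all $2\times2$ minors vanish; organizing these minors and using the Frenet relations \eqref{eq:LegCurvatures} (so that $\bn_t$ involves $\ell$ and $\beta$, and $\bn_\theta$ involves $x$ and $c$), the vanishing conditions should collapse to: $\gamma$ singular and non-front ($\beta(t)=\ell(t)=0$) or the combined degeneracy $(x(t),\beta(t))=(0,0)$ together with $\ell$-data failing. Comparing with ``$\gamma$ is a front and $(x,\beta)\neq(0,0)$'' closes it. The main obstacle I anticipate is bookkeeping: choosing $\bn$ so that the minor computations stay manageable, and correctly identifying \emph{which} minors control the immersivity so that the degeneracy locus simplifies to the clean conditions stated; a prudent route is to work in the adapted frame $\{\nu,\mu\}$ rotated into $\R^3$ rather than in raw coordinates, so that $\ell=\dot\varphi$ does the work and trigonometric clutter cancels.
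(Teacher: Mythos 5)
Your route is the same as the paper's: for $c=0$ the paper takes exactly your rotated normal $\bn^\bz(t,\theta)=(\cos\varphi(t)\cos\theta,\cos\varphi(t)\sin\theta,\sin\varphi(t))$, for $c\neq0$ it takes the normalized cross product \eqref{eq:nuz} (smooth precisely because $\xi_\bz\neq0$), and in both cases it decides frontness by rank analysis of the $6\times2$ Jacobian of the lift $L=(\bz,\bn^\bz)$. So the strategy is sound and matches the paper. However, your description of the degeneracy locus is wrong in both cases, and as stated it would not yield the proposition. In (a), writing the two columns of $dL$ in the frame $\{(\mu_{\rm rot},0),(0,\mu_{\rm rot}),(e_\theta,0),(0,e_\theta)\}$, where $\mu_{\rm rot}=(-\sin\varphi\cos\theta,-\sin\varphi\sin\theta,\cos\varphi)$ and $e_\theta=(-\sin\theta,\cos\theta,0)$, the first column has coordinates $(\beta,\ell,0,0)$ and the second $(0,0,x,\cos\varphi)$; since these lie in orthogonal planes, the rank drops iff one column vanishes, i.e. iff $(\beta(t),\ell(t))=(0,0)$ \emph{or} $(x(t),\cos\varphi(t))=(0,0)$. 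Your claim that the lift fails to immerse ``exactly where $\ell(t)=0$ and simultaneously $(x(t),\cos\varphi(t))$ vanishes'' replaces this union by an intersection (and drops $\beta$), which is a strictly different, weaker locus and does not reconcile with the stated criterion.

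Similarly in (b), the failure locus is $\{\beta=\ell=0\}\cup\{\beta=x=0\}$, with no additional condition on $\ell$ attached to the second set: the paper's Example \ref{ex:contraexemplofrentenaogerafrente} has $\ell(0)\neq0$ (so $\gamma$ is a front) and $x(0)=\beta(0)=0$, and the helicoidal surface still fails to be a front, so your clause ``$(x(t),\beta(t))=(0,0)$ together with $\ell$-data failing'' contradicts the very statement being proved. Both slips are repaired automatically if you actually carry out the minor computation you outline (or, for (b), read off the condition $C_F^\bz\neq0$ from \eqref{eq:CFz} via Proposition \ref{prop:FuTa-Fs}(b)), but as written the characterizations of non-immersivity are incorrect and the ``reconciliation'' step is doing unacknowledged work. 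Your caveat about the ``either $\bx$ or $\bz$'' consequence (pointwise nonvanishing of one $\xi$ versus nonvanishing on all of $I$) is a fair point and consistent with how the paper states the hypothesis.
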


\begin{proof}

We consider the helicoidal surface $\mathbf{z}$; the case of $\mathbf{x}$ is analogous.

By direct computation, we obtain:
%
\begin{multline*}
	(\textbf{z}_t \times \textbf{z}_\theta )(t, \theta) = \beta(t)(-c\,\sin \varphi (t)\sin\theta-x(t)\cos \varphi (t)\cos \theta,  c\,\sin \varphi (t) \cos \theta-x(t)\cos \varphi (t)\sin \theta,\\ -x(t)\sin\varphi (t))
\end{multline*}
and
\[
|(\textbf{z}_t \times \textbf{z}_\theta) (t, \theta)|^2 = (EG - F^2)(t, \theta) = \beta^2(t) \xi_\textbf{z}^2(t),
\]
where \(\xi_\textbf{z}\) is defined by \eqref{eq:xi:z}.

(a) If \(c = 0\), then a unit normal vector field to the surface \(\textbf{z}\) is given by
\[
\bn^\textbf{z}(t, \theta) = (\cos\varphi(t)\cos\theta, \cos\varphi(t)\sin\theta, \sin\varphi(t)).
\]

Consequently, \(\textbf{z}\) is a frontal.

Let \(L = (\textbf{z}, \bn^{\textbf{z}})\) with \(\bn^{\textbf{z}}\) as given above. Since \(\dot \varphi(t) = \ell(t)\), the differential  of $L$ at \((t, \theta)\) is given by the matrix:

\[
dL(t, \theta) = \begin{bmatrix}
	-\beta(t) \sin\varphi(t) \cos\theta & -x(t) \sin\theta \\
	-\beta(t) \sin \varphi(t) \sin\theta & x(t) \cos\theta \\
	\beta(t) \cos \varphi(t) & 0 \\
	-\ell(t) \sin\varphi(t) \cos\theta & -\cos\varphi(t) \sin\theta \\
	-\ell(t) \sin\varphi(t) \sin\theta & \cos\varphi(t) \cos\theta \\
	\ell(t) \cos\varphi(t) & 0
\end{bmatrix}.
\]

If \(\gamma\) is not a front at \(t_0\), then \((\ell(t_0), \beta(t_0)) = (0, 0)\). This leads to the first column of \(dL(t_0, \theta)\) vanishing, indicating that \(\textbf{z}\) is not a front at \((t_0,\theta)\). The same conclusion holds if \((x(t_0), \cos \varphi(t_0)) = (0, 0)\), for some \(t_0 \in I\).

Now, suppose that \(\gamma\) is a front, i.e., \((\ell(t), \beta(t)) \neq (0, 0)\), and \((x(t), \cos \varphi(t)) \neq (0, 0)\), for all \(t\). Under these conditions, the above matrix has rank 2 and, therefore, \(\textbf{z}\) is a front.

(b)  Suppose that $c\neq0$ and  $\xi_\bz(t) \neq 0$, for all $t$. So, it holds that 
	\begin{multline}
		\label{eq:nuz}
		\bn^\bz(t, \theta) = \dfrac{1}{\xi_\bz(t)}(c\,\sin \varphi(t) \sin\theta +x(t)\cos \varphi (t) \cos \theta, \\ -c\, \sin \varphi (t) \cos \theta +x(t)\cos \varphi  (t)\sin \theta, x (t)\sin\varphi (t))
\end{multline}
	is a unit vector field normal to $\textbf{z}$ and $\textbf{z}$ is a frontal. 
	
	Since either $\sin \varphi (t)\neq0$ or $x(t) \neq 0$ and, for $\bn^\bz$ given by \eqref{eq:nuz}, $dL(t,\theta)=d(\textbf{z},\bn^\bz)(t, \theta)$  is given by the matrix
	
	$$\left[\begin{array}{cc}
		-\beta(t) \sin\varphi (t) \cos\theta& -x(t) \sin\theta\\
		-\beta (t) \sin\varphi (t) \sin\theta& x(t) \cos\theta\\
		\beta (t) \cos \varphi  (t)& c\\
	A(t,\theta)  & \frac{c\sin\varphi (t)\cos\theta- x(t) \cos\varphi (t) \sin\theta}{\xi_\bz (t)}\\
		B(t,\theta) & \frac{c\sin\varphi (t) \sin\theta + x (t) \cos\varphi  (t) \cos\theta}{\xi_\bz (t)}\\
		\frac{-c^2 \beta (t) \sin^4\varphi  (t)+ x(t)^3 \ell (t) \cos\varphi (t)}{\xi_\bz^3 (t)}& 0
	\end{array}\right],$$
where 
\begin{multline*}
		A(t,\theta) = \frac{1}{\xi_\bz^3 (t)} \left( c\,\beta (t) \sin^2\varphi (t)(-c\sin\varphi (t) \cos\varphi  (t) \cos\theta +x  (t) \sin\theta) \right. \\ +
\left.	x(t) \,\ell(t)(c\, x (t) \cos\varphi (t) \sin\theta -(c^2 + z(t)^2) \sin\varphi (t) \cos\theta)\right)
\end{multline*}
and 
\begin{multline*} B(t,\theta)= \frac{1}{\xi_\bz^3 (t)} \left(-c\,\beta  (t)\sin^2\varphi (t)(c\sin\varphi (t) \cos\varphi (t) \sin\theta + x(t) \cos\theta ) \right.  \\ -  \left.
	x (t) \,\ell (t)(c\,x (t) \cos\varphi  (t) \cos\theta+ (c^2 + z(t)^2) \sin\varphi (t)\sin\theta) \right)\,.
\end{multline*}
The result follows simply by analyzing this matrix.
\end{proof}


\begin{rem}\label{rem:helifront} By Proposition \ref{prop:curvafrentesupfrente}, the helicoidal surface \(\bz\)  (resp.  \(\bx\)), with \(c \neq 0\) and  \(\xi_\bz(t) \neq 0\) (resp. \(\xi_\bx(t) \neq 0\)) is a front if and only if the profile curve is a front and is  nonsingular at the points where  it intersects the  axis of the surface (see Figure \ref{fig:quandohelnaofrente}).
		Moreover, we observe that when \(c = 0\), both revolution surfaces  \(\bx\) and 		\(\bz\) are frontals, 
 but fail to be fronts at \((t, \theta)\) if and only if  the profile curve is a frontal but not a front at \(t\). In the case of helicoidal surfaces, if the profile curve is a frontal but not a front, then both  \(\bx\) and \(\textbf{z}\) will also fail to be fronts. However, there are cases where the profile curve is a front, but both helicoidal surfaces are not fronts  (see Example \ref{ex:contraexemplofrentenaogerafrente}).
\end{rem}

\begin{figure}[h]
	\centering
	\vspace*{-.9cm}	\hspace*{1cm}	\includegraphics[width=15cm]{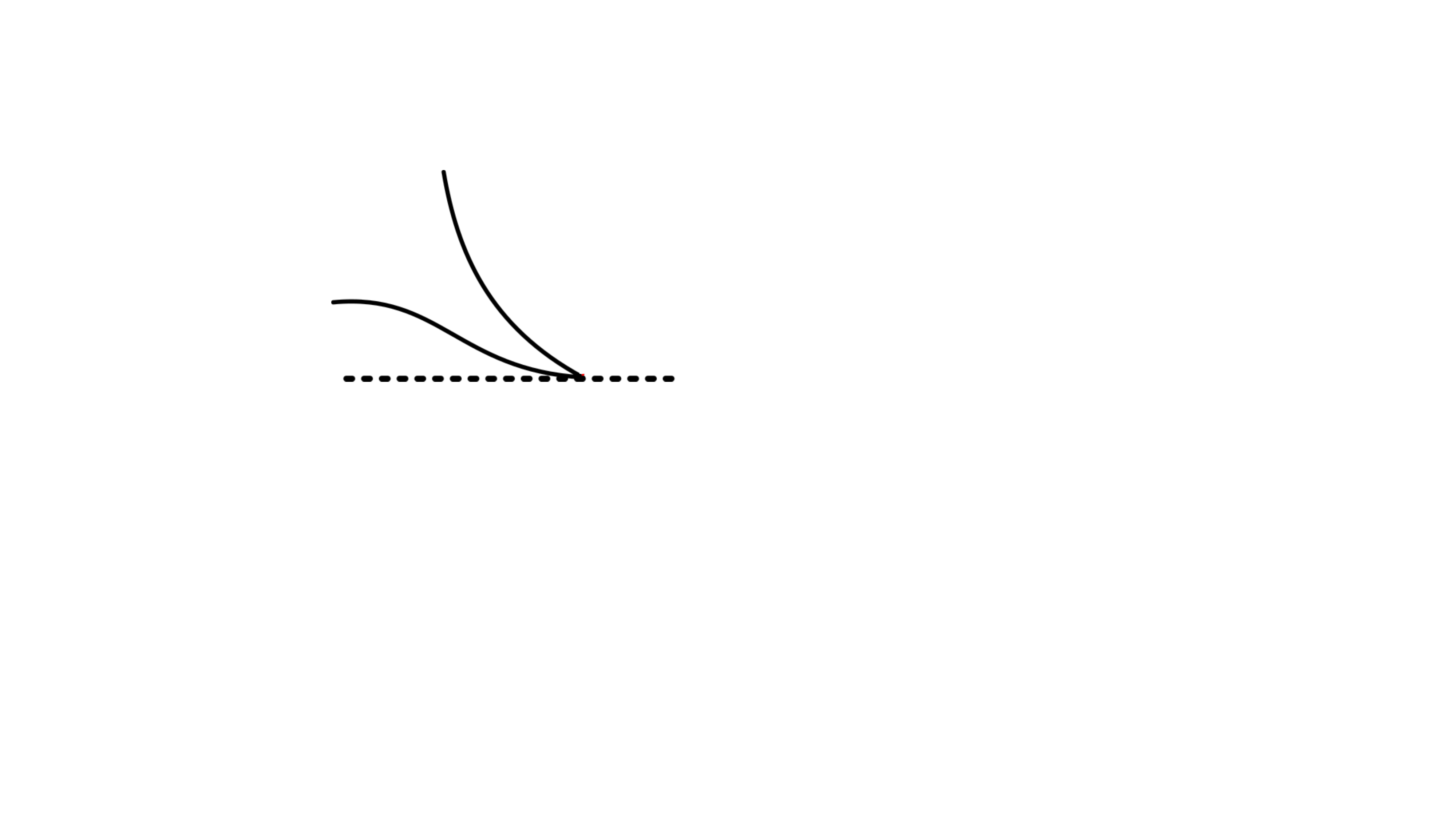}\vspace*{-4cm}
	\caption{An example illustrating a case where the profile curve could be a front, but the corresponding helicoidal surface  is not. The dashed line represents the axis of the helicoidal surface.}
	\label{fig:quandohelnaofrente}
\end{figure}

\begin{ex}\label{ex:contraexemplofrentenaogerafrente}
Consider the frontal  $$\gamma(t)=\left(\frac{(t+1) \cos t + (t-1) \sin t -1}{\sqrt{2}}
,\frac{(1-t) \cos t + (1 + t)\sin t - 1} {\sqrt{2}}
\right)$$ near $t=0$. Then $\varphi(t)=\arcsin\left(\frac{\cos t-\sin t}{\sqrt{2}}\right)$ and $\beta(t)=t$.  Since $\ell(0) = \dot{\varphi}(0) = 1$, it holds that $\gamma$ is a front around $t=0$. On the other hand, since $x(0)=z(0)=\beta(0)=0$, then by Proposition \ref{prop:curvafrentesupfrente},  the helicoidal surfaces $\bx$ and $\bz$ are both not fronts. \end{ex}

\section{Helicoidal surfaces as a framed surface}

In the remainder of this work, we will use the notation introduced in the previous sections.

Under a suitable condition, helicoidal surfaces parametrized by equations \eqref{eq:heligenx} or \eqref{eq:heligenz} are framed base surfaces. In what follows, we restrict our analysis to the surface defined by \eqref{eq:heligenz}, since the case of \eqref{eq:heligenx} is analogous.

\begin{prop} 	Let $(\gamma, \nu): I \rightarrow \R^2 \times S^1$  be a Legendre curve with curvature $(\ell,\beta)$, and let $\bz$ denote the  helicoidal surface generated by  $\gamma(t) = (x(t),z(t))$  around the $z$-axis  given by \eqref{eq:heligenz}. Suppose that $\xi_\bz(t)\neq0$, for all $t$, where  $\xi_\bz$ is defined by \eqref{eq:xi:z}.  Then 
	$(\bz, \bn^\bz, \bs^\bz): I \times \R \rightarrow  \R^3 \times \triangle$  is a framed surface whose basic invariants  $(\mathcal{G}^{\bz}, \mathcal{F}_1^{\textbf{z}}, \mathcal{F}_2^{\bz})$ at $(t, \theta)$  are given by: 	$$\mathcal{G}^{\bz}=\left[\begin{array}{cc}
		0 & -\beta(t)\\
		-\xi_\bz(t)\ & -c \cos\varphi(t)\
	\end{array}\right],$$
	$$\mathcal{F}_1^{\bz}=\dfrac{1}{\xi_\bz(t)}\left[\begin{array}{ccc}
		0 & \frac{c(\beta(t) \sin^2\varphi(t) + \ell (t) x(t) \cos\varphi(t))}{\xi_\bz(t)}& -\ell (t)x(t)\\
	\frac{-c\, (\beta(t) \sin^2\varphi(t) + \ell(t) x(t) \cos\varphi(t))}{\xi_\bz(t)}& 0 & c\,\ell(t) \sin\varphi(t) \\
		\ell(t)	x(t)& -c\, \ell (t)\sin\varphi(t)& 0
	\end{array}\right],$$
	and
	$$\mathcal{F}_2^{\bz}=\dfrac{1}{\xi_\bz (t)}\left[\begin{array}{ccc}
		0 & -\xi_\bz (t)\,\cos\varphi (t)& c\sin^2\varphi(t)\\
		 \xi_\bz (t)\,\cos\varphi(t)& 0 & x (t)\sin\varphi(t) \\
		-c\sin^2\varphi (t)& -x(t) \,\sin\varphi (t)& 0
	\end{array}\right], $$
	where
	\begin{eqnarray}\label{eq:sx}
		\bs^\bz(t,\theta)=\frac{1}{ \xi_\bz(t)} \left( -c\sin\varphi(t)\cos\varphi(t)\cos\theta+x(t)\sin\theta,-c\sin\varphi(t)\cos\varphi(t)\sin\theta-x(t)\cos\theta, \right. \nonumber\\ \left. -c\sin^2\varphi(t) \right)
	\end{eqnarray}
	and $\bn^\bz (t,\theta)$ is given by \eqref{eq:nuz}.
\end{prop}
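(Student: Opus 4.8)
The plan is to verify directly that the prescribed triple $(\bz,\bn^\bz,\bs^\bz)$ is a framed surface and then read off the basic invariants from the defining differential equations \eqref{eq:g-framed-surface} and \eqref{eq:f-framed-surface}. First I would record the orthonormal frame: $\bn^\bz$ is already shown to be a unit normal in the proof of Proposition~\ref{prop:curvafrentesupfrente}(b), so $\bz_t\cdot\bn^\bz=\bz_\theta\cdot\bn^\bz=0$ holds, and one checks $|\bn^\bz|=1$. Then I would confirm that $\bs^\bz$ as defined in \eqref{eq:sx} is a unit vector orthogonal to $\bn^\bz$, so that $(\bn^\bz,\bs^\bz)\in\Delta$; this is a short computation using $a^2+b^2=\cos^2\varphi+\sin^2\varphi=1$ and the definition $\xi_\bz^2=c^2\sin^2\varphi+x^2$. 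With the frame in hand, set $\bt^\bz=\bn^\bz\times\bs^\bz$ and compute this cross product explicitly — one expects $\bt^\bz$ to be, up to sign, the unit tangent direction $(-\sin\varphi\cos\theta,-\sin\varphi\sin\theta,\cos\varphi)$ coming from $\dot\gamma$, which makes the subsequent bookkeeping manageable.

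Next I would compute $\bz_t$ and $\bz_\theta$ (already displayed in Section~3) and express them in the basis $\{\bs^\bz,\bt^\bz\}$: the coefficients $a_1,b_1,a_2,b_2$ are then just $\bz_t\cdot\bs^\bz$, $\bz_t\cdot\bt^\bz$, $\bz_\theta\cdot\bs^\bz$, $\bz_\theta\cdot\bt^\bz$. Since $\bz_t=\beta\,\bt^\bz$ (up to the sign convention for $\bt^\bz$) one immediately gets $a_1=0$ and $b_1=-\beta$ (matching $\mathcal G^\bz$), while $a_2=\bz_\theta\cdot\bs^\bz$ should collapse to $-\xi_\bz$ after using $\sin^2+\cos^2=1$, and $b_2=\bz_\theta\cdot\bt^\bz=-c\cos\varphi$. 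For $\mathcal F_1^\bz$ and $\mathcal F_2^\bz$ I would differentiate the frame: $e_i=\bn^\bz_{\,\cdot}\cdot\bs^\bz$, $f_i=\bn^\bz_{\,\cdot}\cdot\bt^\bz$, $g_i=\bs^\bz_{\,\cdot}\cdot\bt^\bz$ for the $u=t$ and $v=\theta$ partials respectively, repeatedly substituting $\dot a=-\ell b$, $\dot b=\ell a$ (equivalently $\dot\varphi=\ell$), $\dot x=-\beta\sin\varphi$, $\dot z=\beta\cos\varphi$, and simplifying with the identity $\xi_\bz\dot\xi_\bz = c^2\sin\varphi\cos\varphi\,\ell\,+\,x\dot x = (\ell c^2\cos\varphi-\beta x)\sin\varphi$ to obtain the $1/\xi_\bz$ and $1/\xi_\bz^3$ factors.

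The $\theta$-derivatives are the easy ones, since $\theta$ enters only through $\cos\theta,\sin\theta$ in a rigid-rotation pattern; the matrix $\mathcal F_2^\bz$ should drop out quickly. The genuinely laborious part — and the main obstacle — is the $t$-derivative computation yielding $\mathcal F_1^\bz$: here one must differentiate the messy components of $\bn^\bz$ and $\bs^\bz$ (which carry the $1/\xi_\bz$ factor), take inner products, and then reduce a superficially complicated trigonometric-polynomial expression to the compact forms stated, the key simplifications being the Legendre-curve relations above together with $c^2\cos^2\varphi = \xi_\bz^2 - x^2 + c^2\cos^2\varphi - \ldots$ type substitutions and the relation $z^2+c^2 = \xi_\bx^2$ hidden inside $A(t,\theta)$, $B(t,\theta)$. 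Once the six functions $e_i,f_i,g_i$ are confirmed to match the displayed matrices, the integrability conditions \eqref{eq:int-fs} and $\mathcal F_{2\,u}-\mathcal F_{1\,v}=\mathcal F_1\mathcal F_2-\mathcal F_2\mathcal F_1$ are automatically satisfied because the frame was constructed from an actual parametrization, so nothing further need be checked; I would simply remark this. Thus the proof is essentially a guided — but long — direct verification, with the only real risk being arithmetic slips in the $\xi_\bz^3$-denominator terms.
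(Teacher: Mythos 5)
Your proposal is correct and follows essentially the same route as the paper: verify that $\bn^\bz$ is a unit normal and $\bs^\bz$ a unit vector orthogonal to it (so the triple is a framed surface), compute $\bt^\bz=\bn^\bz\times\bs^\bz$ explicitly, and then read off the basic invariants by direct calculation from the defining equations \eqref{eq:g-framed-surface} and \eqref{eq:f-framed-surface} using $\dot\varphi=\ell$ and $\dot x=-\beta\sin\varphi$, which is precisely the paper's (even terser) argument. One cosmetic slip in your list of auxiliary identities: $\xi_\bx^2=c^2\cos^2\varphi+z^2$ rather than $z^2+c^2$, but this plays no role in the verification for $\bz$.
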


\begin{proof}
	Since $\xi_\bz(t)\neq0$, for all $t$, it follows that $\bn^\bz (t, \theta)$, given by \eqref{eq:nuz}, is a unit normal vector field of $\bz$. Moreover, considering the unit vector field $\bs^\bz(t,\theta)$ as defined in \eqref{eq:sx}, we have  $(\bn^\bz\cdot \bs^\bz) (t, \theta)=0$, for all $(t,\te)$. Consequently,  $(\textbf{x},\bn^\bz,\bs^\bz)$ is a framed surface. 
	
	To compute the basic invariants, consider
	$$\bt^{\textbf{z}}(t,\theta)=(\bn^\bz\times \bs^\bz)(t, \theta) =(\sin\varphi(t)\cos\theta,\sin\varphi(t)\sin\theta,-\cos\varphi(t)).$$
Since  $\dot{\varphi}(t) = \ell (t)$,  the matrices $\mathcal{G}^{\textbf{x}}$, $\mathcal{F}^{\textbf{x}}_1$ and $\mathcal{F}_2^{\textbf{x}}$ are as stated above, as can be verified by direct calculation using the definitions given in \eqref{eq:g-framed-surface} and \eqref{eq:f-framed-surface}. \end{proof}

It is observed that when \( c = 0 \), the matrices coincide with those given  in \cite{TT}.

Given the basic invariants, we can compute the curvature \( C_{F}^{\bf z} = (J_F^\bz, K^\bz_F, H^\bz_F) \) of the framed surface,  which at $(t, \theta)$  is given by
\begin{multline}\label{eq:CFz}
C_{F}^{\bf z}= \left(-(\beta \xi_\bz)(t), \dfrac{(c^2\,\beta \sin^4 \varphi-  x^3 \ell  \cos\varphi)(t)}{\xi_\bz^3(t)}, \dfrac{[\beta (\xi^2_\bz + c^2\sin^2\varphi)\cos\varphi](t) + [x \ell (c^2 + x^2)](t)}{2\xi_\bz^2(t)}\right).
\end{multline}
We observe that the above curvature  does not depend on $\theta$.

By Proposition \ref{prop:FuTa-Fs}, \( (\bz, \bn^\bz) \) is a Legendre immersion around \( (t, \theta) \) if and only if \( C_{F}^\bz (t, \theta) \) does not vanish. According to \eqref{eq:CFz}, this condition fails if and only if \( \beta(t) = \ell(t) = 0 \) or \( \beta(t) = x(t) = 0 \), which means that either \( \gamma \) is not a front at \( t \) or it is singular at a point where it intersects the axis of \( \bz \). This result corresponds exactly to Remark \ref{rem:helifront}.

Now we compute the concomitant mapping of the framed surface \( (\bz, \bn^\bz, \bs^\bz) \),  which at $(t, \theta)$  is given by
\begin{align*}
	I_{F}^\bz=\bigg( & C_{F}^\bz (t),\, c (\ell \sin\varphi)(t),\,  -\dfrac{[(x \beta-c^2\ell\cos\varphi)\sin\varphi](t)}{\xi_\bz(t)}, \\ & \dfrac{c[(x \beta\sin^2\varphi+\ell (x^2+\xi_\bz)\cos\varphi)\sin\varphi](t)}{\xi_\bz^3(t)},\,(\ell \sin\varphi)(t), \, \dfrac{c(\beta \sin^2\varphi+x\ell \cos\varphi)(t)}{\xi_\bz(t)}\bigg),
\end{align*}
which also does not depend on $\theta$.

Thus, if \( c \neq 0 \), \( I_F^\bz \) vanishes at \( (t, \theta) \) if and only if  either \( \beta(t) = \ell(t) = 0 \) or $b(t) = x(t) = 0$. Therefore, according to Proposition \ref{prop:FuTa-Fs}, we have shown that if \( (\bz, \bn^\bz) \) is a Legendre immersion, then \( (\bz, \bn^\bz, \bs^\bz) \) is a framed immersion.

For completeness, we provide below the formulas for the helicoidal  surface \(\bx\) parametrized by \eqref{eq:heligenx}:
$$\bn^\bx(t,\theta)=\dfrac{((z\,\cos\varphi)(t),c\cos\varphi(t)\sin\theta+(z \sin\varphi)(t)\cos\theta,-c\cos\varphi(t)\cos\theta+(z\sin\varphi)(t)\sin\te)}{\xi_\bz(t)},$$

$$\bs^\bx(t,\theta)=\dfrac{(-c\cos^2\varphi(t),z(t)\sin\theta -c(\sin\varphi \cos\varphi)(t)\cos\theta,-c(\sin\varphi \cos\varphi)(t)\sin\theta-z(t)\cos\theta)}{\xi_\bx(t)},$$

$$\bt^{\textbf{x}}(t,\theta)=(\bn^\bx\times \bs^{\textbf{x}}) (t,\theta)=(-\sin\varphi(t),\cos\varphi(t)\cos\theta,\cos\varphi(t)\sin\theta).$$

In the following, we shall omit the parameter $t$ in the  functions $\xi_\bx, z, \beta, \varphi, \ell$, and $z$:

$$\mathcal{G}^{\textbf{x}}=\left[\begin{array}{cc}
	0 & \beta\\
	-\xi_\bx & -c \cos\varphi
\end{array}\right],$$
	$$\mathcal{F}_1^{\textbf{x}}=\dfrac{1}{\xi_\bx}\left[\begin{array}{ccc}
	0 &\frac{-c(\beta\cos^2\varphi+z \ell \sin\varphi)}{\xi_\bx}& z\,\ell\\
	\frac{c(\beta\cos^2\varphi+z \ell \sin\varphi)}{\xi_\bx} & 0 & -c \,\ell \cos\varphi\\
	-z\,\ell & c\, \ell  \cos\varphi & 0
\end{array}\right],$$
	$$\mathcal{F}_2^\bx=\dfrac{1}{\xi_\bx}\left[\begin{array}{ccc}
	0 & -\xi_\bx  \sin\varphi& c \cos^2\varphi\\
\xi_\bx  	\sin\varphi & 0 &  z\cos\varphi\\
	-c \cos^2\varphi & -z \cos\varphi & 0
\end{array}\right].$$

\begin{equation}\label{eq:CF}
	C_F^\bx=\left(\beta\xi_\bx,-\dfrac{c^2 \beta\, \cos^4\varphi- z^3 \ell \sin\varphi}{\xi_\bx^3},-\dfrac{\beta(\xi_\bx^2+c^2\cos^2\varphi)\sin\varphi +z\ell \, (c^2 + z^2)}{2\xi_\bx^2}\right).
\end{equation}
\begin{align*}
	I_{F}^\bx=\bigg( & C_{F}^\bx,-c\, \ell \cos\varphi,\frac{(\beta z-c^2\ell \sin\varphi)\cos\varphi}{\xi_\bx}, \\ & -\frac{c[\beta z\cos^2\varphi +\ell(\xi_\bx^2+z^2)\sin\varphi] \cos \varphi}{\xi_\bx^3},
	\ell \cos\varphi,- \frac{c(\beta\cos^2\varphi+z \ell \sin\varphi )}{\xi_\bx}
	\bigg).
\end{align*}

\section{Parallel surfaces}

Given a parametrization $\bx$  of a regular surface of revolution generated by a regular curve \(\gamma\), its parallel surface $\bx^\lambda$ is the surface of revolution generated by the parallel curve $\gamma^\lambda$ of \(\gamma\). The same conclusion extends to  surfaces of revolution generated by Legendre curves as demonstrated in Proposition 3.12 in \cite{TT}. However, the situation is different for helicoidal surfaces.

\begin{ex}
Considering the profile curve  \(\gamma(t) = (t, t)\), \(t > 0\), of the helicoidal surface \(\mathbf{z} (t, \theta) = (t \cos \theta, t \sin \theta, c\,\theta + t)\), its parallel curve is \(\gamma_\lambda (t) = (t - \frac{\lambda}{\sqrt{2}}, t + \frac{\lambda}{\sqrt{2}})\). The helicoidal surface around the \(z\)-axis generated by \(\gamma_\lambda\) is 
\[
 \left( (t + \frac{\lambda}{\sqrt{2}})\cos \theta, (t + \frac{\lambda}{\sqrt{2}})\sin \theta, c\,\theta + t + \frac{\lambda}{\sqrt{2}}\right).
\]
On the other hand, the parallel surface of $\bz$  is given by
	\[
	\mathbf{z}^\lambda (t, \theta) = \left( t(1 + \frac{\lambda}{\xi_\bz^\lambda (t)}) \cos \theta + \frac{\lambda c}{\xi_\bz^\lambda (t)}  \sin \theta, t (1 + \frac{\lambda }{\xi_\bz^\lambda (t)}) \sin \theta - \frac{\lambda  c}{\xi_\bz^\lambda (t)}  \cos \theta, c\,\theta + t (1 + \frac{\lambda}{\xi_\bz^\lambda (t)})\right),
	\]
	where \(\xi_\bz^\lambda (t) = \sqrt{c^2 + 2t^2}\).
	
	Thus, the helicoidal surface generated by a parallel curve of $\gamma$ is a parallel surface of the helicoidal surface generated by $\gamma$ if and only if $c=0$, that is, $\bz$ is a surface of revolution.

\end{ex}

The following result highlights the more intricate geometric behavior of parallel surfaces in the helicoidal setting, contrasting with the simpler case of surfaces of revolution.

\begin{theo}\label{theo:paralHelic}
	Let $(\gamma, \nu): I \rightarrow \R^2 \times S^1$  be a Legendre curve with curvature $(\ell,\beta)$, and let $\bz$ denote the  helicoidal surface generated by  $\gamma(t) = (x(t),z(t))$  around the $z$-axis  given by \eqref{eq:heligenz}, where  $x(t) \neq 0$, for all $t$.  Given $\lambda \in \R$, the parallel surface $\bz^\lambda: I \times \R \rightarrow \R^3$ of $\bz$  is  also a helicoidal surface, generated by a curve in the $xz$-plane which belongs to a 1-parameter deformation  of the parallel curve $\gamma_{\varepsilon \lambda}$ of  $\gamma$, where $\varepsilon = {\sgn({x(t)})}$.
\end{theo}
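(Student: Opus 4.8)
The plan is to compute $\mathbf{z}^\lambda = \mathbf{z} + \lambda\,\mathbf{n}^\mathbf{z}$ explicitly using the formula \eqref{eq:nuz} for $\mathbf{n}^\mathbf{z}$, and then to recognize the resulting parametrization as a helicoidal surface around the $z$-axis, with the same slant $c$, generated by a new plane profile curve. First I would write
\[
\mathbf{z}^\lambda(t,\theta) = \mathbf{z}(t,\theta) + \lambda\,\mathbf{n}^\mathbf{z}(t,\theta),
\]
substitute \eqref{eq:heligenz} and \eqref{eq:nuz}, and collect the coefficients of $\cos\theta$ and $\sin\theta$ in the first two coordinates. The key algebraic observation to aim for is that the first two coordinates can be rewritten in the form $(X(t)\cos\theta - Y(t)\sin\theta,\ X(t)\sin\theta + Y(t)\cos\theta)$ for suitable functions $X, Y$, while the third coordinate takes the form $c\,\theta + Z(t)$. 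Writing $R_\theta$ for the rotation by $\theta$ about the $z$-axis, this says $\mathbf{z}^\lambda(t,\theta) = \big(R_\theta(X(t),Y(t)),\, c\theta + Z(t)\big)$; since the profile curve $(X(t),Y(t),Z(t))$ does not in general lie in the $xz$-plane (the $Y$-component comes from the $c\sin\varphi\sin\theta$, $-c\sin\varphi\cos\theta$ terms in $\mathbf{n}^\mathbf{z}$), one reparametrizes by an extra rotation: there is a function $\psi(t)$ with $(X(t),Y(t)) = R_{\psi(t)}(\rho(t),0)$, and absorbing $\psi$ into $\theta$ exhibits $\mathbf{z}^\lambda$ as the helicoidal surface of a genuine plane curve $\widetilde\gamma_\lambda(t) = (\rho(t) + (\text{correction}),\, Z(t))$ in the $xz$-plane — with a twist-dependent shift along $\theta$ of the form $c\theta \mapsto c\theta + c\,\psi(t)$, which is exactly the $1$-parameter deformation phenomenon the theorem advertises. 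Here the hypothesis $x(t)\neq 0$ is what guarantees $\xi_\mathbf{z}(t) \neq 0$ and lets us write $\varepsilon = \sgn(x(t))$ as a (locally) constant sign, so that along the branch $x > 0$ (resp.\ $x<0$) the deformation interpolates to $\gamma_{\varepsilon\lambda}$.

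Concretely, the identification I would target is: set $\xi = \xi_\mathbf{z}(t) = \sqrt{c^2\sin^2\varphi + x^2}$, and observe that
\[
X(t) = \Big(x + \frac{\lambda x\cos\varphi}{\xi}\Big),\qquad
Y(t) = \frac{\lambda c\sin\varphi}{\xi}\,\frac{?}{?}, \qquad Z(t) = z + \frac{\lambda x \sin\varphi}{\xi},
\]
up to the bookkeeping of signs in \eqref{eq:nuz}; then $X^2 + Y^2$ should simplify, using $x^2 + c^2\sin^2\varphi = \xi^2$ and $a^2+b^2 = 1$, to something like $\big(|x| + \varepsilon\lambda\big)^2 + (\text{terms})$, making the radial profile distance $\rho(t) = |x(t)| + \varepsilon\lambda\cos\varphi(t) + \cdots$ visibly a deformation of the radial part $x(t) + \lambda a(t)$ of $\gamma_{\varepsilon\lambda}(t) = \gamma(t) + \varepsilon\lambda\nu(t)$. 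The main obstacle I anticipate is not conceptual but organizational: correctly tracking the several sign conventions (the sign of $\varepsilon$, the orientation choice implicit in $\nu$ versus $\mathbf{n}^\mathbf{z}$, and the branch of the square root $\xi_\mathbf{z}$) so that the limiting curve is genuinely $\gamma_{\varepsilon\lambda}$ and not $\gamma_{-\varepsilon\lambda}$, and simultaneously producing a clean closed form for the angular shift $\psi(t)$ — essentially $\psi(t) = \arctan(Y(t)/X(t))$ — whose derivative $\dot\psi$ is what encodes the coupling between the helicoidal twist (through $c$) and the curvature data $(\ell,\beta)$ of $\gamma$.

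Once the parametrization is in the normalized helicoidal form, the final step is to read off the plane curve and name the deformation: define $\Gamma_\lambda^s(t)$ for $s \in [0,1]$ by scaling the genuinely helicoidal correction terms by $s$ (so $\Gamma_\lambda^0 = \gamma_{\varepsilon\lambda}$ and $\Gamma_\lambda^1$ is the profile curve of $\mathbf{z}^\lambda$ found above), check that each $\Gamma_\lambda^s$ is again a plane curve in the $xz$-plane, and verify — which is immediate from the explicit formulas, since the correction terms all carry an explicit factor of $c$ — that at $c = 0$ the deformation is constant and equal to $\gamma_\lambda$, recovering the surface-of-revolution case of \cite{TT} and Proposition~3.12 therein. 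This also makes clear why the deformation is genuinely parametrized by the slant $c$, as stated. The detailed persistence-of-singularities analysis of this deformation is deferred to Theorem~\ref{theo:def-par}.
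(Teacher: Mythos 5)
Your proposal follows essentially the same route as the paper: compute $\bz^\lambda=\bz+\lambda\bn^\bz$, recognize it as a helicoidal surface generated by a space curve, and absorb the rotation angle $\psi(t)=\arctan(Y(t)/X(t))$ into $\theta$ to obtain a plane profile curve --- this last step is exactly the paper's Lemma~\ref{lem:espacialtoplane} (with the cases $X(t_0)=0$ handled there by comparing vanishing orders of $X$ and $Y$, a detail you should not skip), after which one checks, as you do, that at $c=0$ the profile curve is $\gamma(t)+\sgn(x(t))\lambda\nu(t)=\gamma_{\varepsilon\lambda}(t)$. The auxiliary interpolation $\Gamma^s_\lambda$, $s\in[0,1]$, is unnecessary: in the paper the deformation parameter is simply the slant $c$ itself, with $\gamma_{\lambda,0}=\gamma_{\varepsilon\lambda}$.
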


In order to prove this theorem, we first present the following lemma. 

We can consider helicoidal surfaces generated by a space curve $\alpha(t) = (x_1(t), x_2(t), x_3(t))$. For example, if the $z$-axis is the axis of the helicoidal surface,  it can be parametrized by:
\begin{equation}\label{eq:sh:espacial}
	{\bar \bz}(t, \theta) = (x_1(t) \cos \theta - x_2(t) \sin \theta, x_1(t) \sin \theta + x_2(t) \cos \theta, c\, \theta + x_3(t)) \ .
\end{equation}

For the local study of helicoidal surfaces, we can assume that the profile curve is a plane curve. More precisely, the following result shows that such a planar curve can be explicitly constructed, depending on the original spatial curve that generates the helicoidal surface.

\begin{lem}\label{lem:espacialtoplane}
The helicoidal surface given by \eqref{eq:sh:espacial} generated by a space curve is  a helicoidal surface generated by a plane curve.
\end{lem}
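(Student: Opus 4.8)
The plan is to exploit the rotational symmetry of the helicoidal motion to "flatten" the space curve $\alpha(t) = (x_1(t),x_2(t),x_3(t))$ into a plane curve lying in the $xz$-plane, without changing the resulting surface. The key observation is that the surface $\bar\bz$ in \eqref{eq:sh:espacial} is invariant, up to a shift in the parameter $\theta$, under the one-parameter group of helical motions generated by its own construction; equivalently, the point $\alpha(t)$ and the point obtained by rotating $\alpha(t)$ about the $z$-axis by an angle $\psi(t)$ and translating by $c\psi(t)$ along the $z$-axis both lie on $\bar\bz$ and generate the same trace as $t$ and $\theta$ vary. So I would write $(x_1(t),x_2(t)) = r(t)(\cos\omega(t),\sin\omega(t))$ in polar form (wherever $(x_1,x_2)\neq(0,0)$; the degenerate locus is handled separately or excluded for the local statement), set $\psi(t) = -\omega(t)$, and define the new profile curve by $\tilde\gamma(t) = (r(t), x_3(t) - c\,\omega(t)) = (\tilde x(t),\tilde z(t))$ in the $xz$-plane.

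The main steps, in order, are: (1) write $\bar\bz(t,\theta)$ and substitute $\theta = \varphi + \omega(t)$ as a change of the second coordinate (for fixed $t$ this is just a reparametrization of the $\theta$-line, a diffeomorphism $\R\to\R$); (2) expand the trigonometric terms using the angle-addition formulas, so that $x_1\cos\theta - x_2\sin\theta = r\cos\varphi$, $x_1\sin\theta + x_2\cos\theta = r\sin\varphi$, and $c\theta + x_3 = c\varphi + (c\omega + x_3)$; (3) recognize that the resulting parametrization is exactly the helicoidal surface around the $z$-axis, in the sense of \eqref{eq:heligenz}, generated by the plane curve $\tilde\gamma(t) = (r(t), x_3(t) + c\,\omega(t))$; (4) note that $(t,\theta)\mapsto(t,\varphi)$ is a global diffeomorphism of $I\times\R$, so the two parametrizations have the same image and the same singular/regular structure, i.e.\ they are the same helicoidal surface. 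One should also remark that $\tilde\gamma$ is a frontal (indeed a Legendre curve) whenever $\alpha$ carries a suitable frame, so the construction stays inside the class considered in the paper; this follows because the radial distance $r(t)$ and the adjusted height are smooth functions of $t$.

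The main obstacle is the smoothness of the polar-angle function $\omega(t)$: it is only well defined, and smooth, on the open set where $(x_1(t),x_2(t))\neq(0,0)$, and globally it is defined only up to adding a locally constant integer multiple of $2\pi$ — but since the helicoidal surface is $2\pi$-periodic in $\theta$ anyway (up to the $c\cdot 2\pi$ vertical shift, which is absorbed into the statement), this ambiguity is harmless for the trace. For the points where the space curve meets the axis, $\omega$ need not extend smoothly, which is precisely why the lemma is phrased for the \emph{local} study of helicoidal surfaces; on any subinterval of $I$ avoiding the axis the argument is clean, and this suffices for the applications in Sections 5 and 6 (where it is used to realize the space profile curve of the focal surface as a plane curve). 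I would close by noting explicitly that $r(t) = \sqrt{x_1(t)^2+x_2(t)^2}$ and $\omega(t) = \arctan(x_2(t)/x_1(t))$ (branch chosen continuously), and that substituting these into the displayed computation of steps (2)–(3) verifies the claim by direct calculation.
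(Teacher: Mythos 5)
Your proposal follows essentially the same route as the paper: absorb the polar angle of $(x_1(t),x_2(t))$ into the rotation parameter via the change of variables $(t,\theta)\mapsto(t,\theta-\omega(t))$, and read off the plane profile curve $\bigl(\pm\sqrt{x_1^2+x_2^2},\,x_3-c\,\omega\bigr)$. Two small slips first: with your substitution written as $\theta=\varphi+\omega(t)$ you would get $r\cos(\varphi+2\omega)$, not $r\cos\varphi$; the correct choice is $\theta=\varphi-\omega(t)$, and correspondingly the height of the profile curve is $x_3(t)-c\,\omega(t)$, so your step (3), which writes $x_3(t)+c\,\omega(t)$, is inconsistent with your own definition of $\tilde\gamma$ (and with the paper's formula).

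The genuine gap is your treatment of the locus where the space curve meets the axis, i.e.\ where $(x_1(t),x_2(t))=(0,0)$. You exclude it, asserting that this is harmless "for the local statement" and for the applications; but the lemma is stated for an arbitrary space curve, it is a local statement at an arbitrary parameter value including axis points, and such points do occur in the paper's use of the lemma (the curves \eqref{eq:zl} generating the parallel and focal surfaces can meet the $z$-axis; see Remark~\ref{rem:XZ}, where the cases $x_1=x_2=0$ are explicitly listed). At such a point neither $r(t)=\sqrt{x_1^2(t)+x_2^2(t)}$ nor a continuous branch of the polar angle $\omega(t)$ need be smooth, so your construction simply does not produce a smooth plane curve there. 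The paper's proof is organized precisely around this issue: writing $x_1(t)=t^m\bar x_1(t)$, $x_2(t)=t^n\bar x_2(t)$ with $\bar x_1(0),\bar x_2(0)\neq 0$ and splitting into the cases $n\geq m$ and $n<m$, it uses whichever ratio $x_2/x_1$ or $x_1/x_2$ extends smoothly through $t=0$, so that $\arctan$ of that ratio is a smooth angle function even when the curve crosses the axis with finite order, and it replaces your unsigned radius by the \emph{signed} one $\sgn(x_1)\sqrt{x_1^2+x_2^2}$ (resp.\ $\sgn(x_2)\sqrt{x_1^2+x_2^2}$), which equals $t^m$ times a smooth nonvanishing function and hence is smooth across the crossing, whereas $\sqrt{x_1^2+x_2^2}=|t|^m(\cdots)$ is not when $m$ is odd. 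To close the gap you would need to add exactly this kind of argument (or an equivalent one) at axis points, rather than excluding them.
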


\begin{proof} 
Without loss of generality, we assume that   $\alpha(t) = (x_1(t), x_2(t), x_3(t))$   is defined in a neighborhood of  $t=0$.
		Let $m, n \in \mathbb{Z}$ be such that $x_1(t) = t^m \bar{x}_1(t)$ and $x_2(t) = t^n \bar{x}_2(t)$, where $\bar{x}_1(0)\neq 0$ and $\bar{x}_2(0)\neq 0$.  We shall consider two cases: $n \geq m$ and $n < m$.
	
	For the case $n \geq m$, we define the local diffeomorphism $h(t,\te) = (t,\te - \arctan (\frac{x_2(t)}{x_1(t)}) )$, which is well-defined near the origin, since $\frac{x_2(t)}{x_1(t)} = t^{n-m}\frac{\bar{x}_2(t)}{\bar{x}_1(t)}$ and $\bar{x}_1(0) \neq 0$.
	So 
	$$
	\bar{\bz}(h(t,\te)) = (X(t)\cos\te, X(t)\sin\te, Z(t) + c\,\te ),
	$$
	where
	\begin{equation}\label{eq:XZ1}
		\left\{\begin{array}{l}
	X (t)= \sgn (x_1(t)) \sqrt{x_1^2(t) + x_2^2(t)}\\ Z(t) = x_3(t) - c\,\arctan (\frac{x_2(t)}{x_1(t)}) . 
		\end{array}\right.
	\end{equation}

	On the other hand, if $n < m$, then the function $x_1(t)/x_2(t)$ is well-defined near the origin. So taking the diffeomorphism 
	$h(t,\te) = (t,\te + \arctan (\frac{x_1(t)}{x_2(t)}) - \frac{\pi}{2})$, we have
	$$
	\bar{\bz}(h(t,\te)) = (X(t)\cos\te, X(t)\sin\te, Z(t) + c\,\te ),
	$$
	where 
	\begin{equation}\label{eq:XZ2}
		\left\{\begin{array}{l}
		X(t) = \sgn (x_2(t))\sqrt{x_1^2(t) + x_2^2(t)} \\ Z(t) = x_3(t) + c (\arctan (\frac{x_1(t)}{x_2(t)} ) - \frac{\pi}{2}) .
		\end{array}\right.
	\end{equation}

	In both situation, $\bar{\bz} \circ h$ is a helicoidal surface around $z$-axis with slant $c$ and generated by the plane curve $(X(t), 0, Z(t))$.
\end{proof}

\medskip

\begin{rem}\label{rem:XZ}
	Let us observe that, if the space curve $\alpha(t) = (x_1(t), x_2(t), x_3(t))$ that generates the helicoidal surface $\bz$ satisfies \( x_1(t)x_2(t) \neq 0 \), then  \eqref{eq:XZ1} and \eqref{eq:XZ2} coincide if and only if \( x_1(t)x_2(t) > 0 \). In the case of $x_1(t)x_2(t) < 0$, the equations differ only by the isometry \( (x, z) \mapsto (-x, z + c \pi) \). This implies that, although the expressions may not coincide, they both represent intersections of \( \mathbf{z} \) with the \( xz \)-plane, but in different regions. Moreover,  any of them can be taken locally as a profile curve for \( \mathbf{z} \).
	
	Based on this observation, together with some analysis, we can express the profile plane curve $(X(t), 0,  Z(t))$ as follows:
	\[
	X(t) =
	\begin{cases}
		\operatorname{sgn}(x_2(t)) \sqrt{x_1^2(t) + x_2^2(t)}, & x_1(t)x_2(t) \neq 0;\\
		x_2(t), & x_1(t) = 0,\ x_2(t) \neq 0; \\
		x_1(t), & x_1(t) \neq 0,\ x_2(t) = 0; \\
		0, & x_1(t) = x_2(t) = 0.
	\end{cases}
	\]
	\[
	Z(t) =
	\begin{cases}
		x_3(t) - c\,\arctan\left( \dfrac{x_2(t)}{x_1(t)} \right), & x_1(t)x_2(t) > 0; \\
		x_3(t) - c\,\arctan\left( \dfrac{x_2(t)}{x_1(t)} \right) + c \pi, & x_1(t)x_2(t) < 0; \\
		x_3(t) - \dfrac{c\pi}{2}, & x_1(t) = 0,\ x_2(t) \neq 0; \\
		x_3(t), & x_2(t) = 0.
	\end{cases}
	\]
\end{rem}

\medskip

\begin{proof}\textbf{\hspace*{-.5cm} of Theorem \ref{theo:paralHelic}}
 \hspace*{.2cm}	
 Given $\lambda \in \R$, by direct calculations using \eqref{eq:heligenz} and \eqref{eq:nuz}, the  parallel surface of $\bz$ is given by:  
	\begin{equation*}
		\bz^\lambda (t, \theta)= (x_1(t) \cos \theta - x_2(t) \sin \theta, x_1(t) \sin \theta + x_2(t) \cos \theta, c\, \theta + x_3(t)),
	\end{equation*}
	where
		\begin{equation}\label{eq:zl} x_1(t) = x(t)(1  + \frac{\lambda \cos \varphi (t)}{\xi_\bz(t)}),\,  x_2 (t)= -\frac{c \,\lambda \sin \varphi (t)}{\xi_\bz(t)}, \, x_3(t)  = z(t) + \frac{\lambda \,x(t) \sin \varphi (t)}{\xi_\bz(t)}\, . 
				\end{equation}
	
	Then,  $\bz^\lambda$ is a helicoidal surface generated by the space curve $\alpha(t) = (x_1(t), x_2(t),$  $x_3(t))$. By Lemma \ref{lem:espacialtoplane}, this helicoidal is generated by a plane curve $\gamma_{\lambda,c} (t) =(X(t), 0,  Z(t))$, with $X,Z$  given by  Remark \ref{rem:XZ}.
	
	 When  $c = 0$, it holds that $\xi_\bz(t) = |x(t)|$. Since 
	 \begin{equation}\label{eq:gam:lambda:0}
\gamma_{\lambda,0}(t) = (x_1(t), 0, x_3(t)) = (x(t) + \varepsilon \lambda \cos \varphi (t), 0, z(t) + \varepsilon\lambda \sin \varphi(t))\,,
	 \end{equation}
	 where $\varepsilon = {\sgn({x(t)}})$,
	 then $\gamma_{\lambda,0}(t)  = \gamma(t) + \sgn(x(t)) \lambda \nu(t)$. 
	 Consequently, the curve $\gamma_{\lambda,0}$ coincides with the parallel curve $\gamma_\lambda$ (resp.  $\gamma_{-\lambda}$)  of $\gamma$  if $x(t) >0$ (resp. if $x(t) <0)$.
\end{proof}

In the following, we shall use the notation $\gamma_{\lambda,c}$ to denote both  the plane profile curve in the $xz$-plane  associated with the parallel surface $\bz^\lambda$, and the  1-parameter deformation of the parallel curve $\gamma_{\varepsilon \lambda}$ of $\gamma$  described  in Theorem \ref{theo:paralHelic}.

It is natural to ask whether, when $\gamma_{\varepsilon \lambda}$ is singular, the 1-parameter deformation $\gamma_{\lambda,c}$   is a versal deformation. The proposition below shows that this is not the case. 

\begin{prop}\label{lem:c=-cParalel}
	The deformation $\gamma_{\lambda,c}(t) = (X(t), 0, Z(t))$, with $X$ and $Z$ given in Remark \ref{rem:XZ}, depends only on the absolute value of $c$  up to isometry. That is, the curves $\gamma_{\lambda,c}$ and $\gamma_{\lambda,-c}$ are congruent via a rigid motion of $\mathbb{R}^3$.
\end{prop}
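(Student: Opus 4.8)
The plan is to exhibit an explicit rigid motion of $\R^3$ carrying $\gamma_{\lambda,c}$ onto $\gamma_{\lambda,-c}$, by tracking what happens to the formulas in Remark~\ref{rem:XZ} when $c$ is replaced by $-c$. Recall that $\gamma_{\lambda,c}(t) = (X(t),0,Z(t))$ is obtained from the space curve $\alpha(t)=(x_1(t),x_2(t),x_3(t))$ of \eqref{eq:zl} via Lemma~\ref{lem:espacialtoplane}. The key observation is that replacing $c$ by $-c$ affects $\alpha$ only through its second and third coordinates: $x_1$ is unchanged, $x_2 \mapsto -x_2$, and $x_3 \mapsto x_3$ \emph{except} that in \eqref{eq:zl} the term involving $c$ in $x_3$ is absent — wait, $x_3$ in \eqref{eq:zl} does not contain $c$, so $x_3$ is genuinely unchanged while $x_2\mapsto -x_2$. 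Thus the space curve for slant $-c$ is $(x_1(t),-x_2(t),x_3(t))$, i.e.\ the reflection of $\alpha$ in the plane $x_2=0$.

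First I would record this reflection fact and then chase it through the reduction in Lemma~\ref{lem:espacialtoplane}. Reflecting $\alpha$ across $\{x_2=0\}$ sends $\arctan(x_2/x_1)\mapsto -\arctan(x_2/x_1)$ and $\sqrt{x_1^2+x_2^2}$ to itself, and flips the sign condition $x_1x_2\gtrless 0$. Feeding this into the case formulas of Remark~\ref{rem:XZ}: the $X$-coordinate is unaffected (it depends only on $|x_1^2+x_2^2|^{1/2}$ and on $\sgn$ of $x_1$ or $x_2$, and the relevant sign-selection swaps consistently), while in $Z$ each occurrence of $\arctan(x_2/x_1)$ changes sign and the constant offsets $c\pi$, $-c\pi/2$ change sign. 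Comparing term by term with the formula for $\gamma_{\lambda,-c}$ — which has slant $-c$ and the \emph{same} reduction applied — I expect to find $Z_{\lambda,c}(t)$ and $Z_{\lambda,-c}(t)$ related by $Z_{\lambda,-c}(t) = -\,( Z_{\lambda,c}(t) - 2x_3(t))$ up to an additive constant, or more cleanly that the two plane curves agree after the affine isometry $(x,z)\mapsto(x,\,2z(t_0)-z)$ composed with a vertical translation; the cleanest packaging is that the helicoidal surface $\bar\bz$ of slant $-c$ is the image of the slant-$c$ surface under the orientation-reversing screw-compatible isometry $(X_1,X_2,X_3)\mapsto(X_1,-X_2,-X_3)$ of $\R^3$ (reflection across the $x_1x_3$... ) restricted appropriately, whence their profile curves are congruent.

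The step I expect to be the main obstacle is the bookkeeping of constants and signs across the piecewise definition in Remark~\ref{rem:XZ}: the four cases for $X$ and the four for $Z$ must be shown to transform \emph{consistently} into one another under $c\mapsto -c$, and in particular one must verify that the $\pm c\pi$ and $\pm c\pi/2$ offsets — which encode the branch of $\arctan$ and the region of $\R^3$ in which the profile curve lies — combine so that the net effect on $\gamma_{\lambda,c}$ is a single rigid motion independent of $t$ and of which case one is in. The right way to avoid a messy case analysis is probably to argue at the level of the surfaces: the helicoidal surface generated by $(x_1,-x_2,x_3)$ with slant $-c$ is the same point set (or an isometric copy) as that generated by $(x_1,x_2,x_3)$ with slant $c$, because a helical motion of slant $-c$ about the $z$-axis is conjugate, via the reflection $(X_1,X_2,X_3)\mapsto(X_1,-X_2,X_3)$, to a helical motion of slant $c$; since any plane profile curve for a helicoidal surface is determined by the surface up to the ambiguities already catalogued in Remark~\ref{rem:XZ}, the two plane curves $\gamma_{\lambda,c}$ and $\gamma_{\lambda,-c}$ differ by a rigid motion of $\R^3$, as claimed. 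I would then, if desired, identify that rigid motion explicitly as $(x,z)\mapsto(x,-z+\mathrm{const})$ by comparing leading terms at a chosen base point.
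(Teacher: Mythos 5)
Your overall route is essentially the paper's: the key observation that under $c\mapsto-c$ the space curve \eqref{eq:zl} changes only by $x_2\mapsto-x_2$ (note that $x_1$ and $x_3$ \emph{do} contain $c$, contrary to your parenthetical, but only through $\xi_\bz=\sqrt{c^2\sin^2\varphi+x^2}$, i.e.\ through $|c|$), followed by pushing this reflection through the formulas of Remark \ref{rem:XZ} and using the oddness of $\arctan$ — which is exactly the paper's short proof. Your surface-level repackaging (conjugating the screw motion by a reflection fixing the profile plane) is a legitimate, arguably cleaner, alternative to the casework.

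Two concrete slips should be fixed, though. First, your guessed form of the rigid motion is wrong: since \emph{both} $c$ and $\arctan(x_2/x_1)$ change sign, the product $c\arctan(x_2/x_1)$, and hence $Z$, is unchanged (up to the $\pm c\pi$ branch constants). Thus with the convention $X=\sgn(x_1)\sqrt{x_1^2+x_2^2}$ of \eqref{eq:gammalambdac} the curves $\gamma_{\lambda,c}$ and $\gamma_{\lambda,-c}$ actually coincide, while with the $\sgn(x_2)$ convention of Remark \ref{rem:XZ} they differ by $(x,z)\mapsto(-x,\,z-c\pi)$ on $\{x_1x_2\neq0\}$; there is no reflection $z\mapsto -z+\mathrm{const}$, and your tentative relation $Z_{\lambda,-c}=2x_3-Z_{\lambda,c}+\mathrm{const}$ is not even a rigid motion, since $x_3$ depends on $t$. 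Second, the map $(X_1,X_2,X_3)\mapsto(X_1,-X_2,-X_3)$ in your middle paragraph is a rotation by $\pi$ about the $X_1$-axis; it conjugates the slant-$c$ screw motion to itself and does not convert slant $c$ into slant $-c$. The correct isometry is the one you state in your final paragraph, the reflection $(X_1,X_2,X_3)\mapsto(X_1,-X_2,X_3)$: it carries the slant-$c$ surface and (using the equivariance of the normal \eqref{eq:nuz}) its $\lambda$-parallel onto the slant-$(-c)$ surface and its $\lambda$-parallel, and since it fixes the $xz$-plane pointwise the two parallel surfaces have the same planar section, giving the congruence directly. Finally, if you do argue through the piecewise formulas, be aware that the strata $x_1=0$ and $x_2=0$ transform by a different motion (a $+c\pi$ shift, resp.\ the identity) than the generic stratum, so the ``single rigid motion'' statement really concerns the generic case — a subtlety the paper's own two-line proof also glosses over.
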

\begin{proof}
	By \eqref{eq:zl}, the functions $x_1$ and $x_3$  depend only on the absolute value of $c$, whereas $x_2$ changes its sign when $c$ changes sign. Thus, the result follows from the  expressions of $X$ and $Z$ given  in Remark \ref{rem:XZ}, and properties of the function $\arctan$. 
\end{proof}

The  symmetry given in the above proposition implies that, when $\gamma_{\varepsilon \lambda}$ is singular, the deformation does not  capture all possible local behaviors of the singularity. Hence, $\gamma_{\lambda,c}$ is not a versal deformation of $\gamma_{\varepsilon \lambda}$. This leads to a natural question: how is a singularity of  $\gamma_{\varepsilon \lambda}$  deformed by  $\gamma_{\lambda,c}$? This question is addressed in the next theorem.

By Lemma \ref{lem:espacialtoplane} and \eqref{eq:zl}, and assuming $x_1 (t) \neq 0$  for all $t$, we may take: 

\begin{equation}\label{eq:gammalambdac}
	\gamma_{\lambda,c} (t)= \left(\sgn (x_1(t))\sqrt{x_1^2(t) + x_2^2(t)}, x_3(t) - c \arctan (\frac{x_2(t)}{x_1(t)} )\right).
\end{equation}

\begin{rem} \label{rem:x1naoZero}
	Under the above notation, suppose that \( c = 0 \) and that the profile curve \( \gamma (t) =  (x(t), z(t))\) of \( \bz \) is regular at some \( t_0 \in I \), with $x(t_0)\neq 0$. If the parallel curve \( \gamma_{\varepsilon \lambda} \)  is singular at \( t_0 \), then \( x_1(t_0) \neq 0 \) if and only if \( \bz(t_0, \theta) \) is not an umbilic point of \( \bz \).
	In fact, since  $\gamma_{\varepsilon\lambda}= \gamma_{\lambda,0} = (x_1(t), x_3(t))$ given by \eqref{eq:gam:lambda:0},  then $\dot{\gamma}_{\varepsilon\lambda}(t) = (\beta(t) + \varepsilon \lambda \ell(t) )(-\sin \varphi(t), \cos \varphi (t))$, where $(\ell, \beta)$ is the curvature of the Legendre curve $(\gamma, \nu)$. Therefore,  \( \gamma_{\varepsilon\lambda} \) is singular at \( t_0 \) if and only if $ 
	\lambda = - \varepsilon\beta(t_0)/\ell(t_0)$.  By  \eqref{eq:gam:lambda:0},  
	$
	x_1(t) = x(t) + \varepsilon \lambda \cos\varphi(t).
	$
	Substituting \( \lambda\), we get
	\[
	x_1(t_0) = x(t_0) - \frac{\beta(t_0)}{\ell(t_0)} \cos\varphi(t_0).
	\]
	Therefore, \( x_1(t_0) = 0 \) if and only if
	$
	x(t_0)\ell(t_0) - \beta(t_0)\cos\varphi(t_0) = 0.
	$
	
	Now, observe that \( \bz \) is a regular surface of revolution in a neighborhood of \( (t_0, \theta) \), and its principal curvatures at that point are given by 
	$ -\frac{\ell(t_0)}{\beta(t_0)}$ and $-\frac{\cos\varphi(t_0)}{x(t_0)}.
	$
	Then, the point \( \bz(t_0, \theta) \) is umbilic if and only if 
	$$ x(t_0)\ell(t_0) - \beta(t_0)\cos\varphi(t_0) = 0.$$
	Thus, \( x_1(t_0) = 0 \) if and only if \( \bz(t_0, \theta) \) is an umbilic point, which proves the assertion.
	
	Moreover, if  \( x_1(t_0) \neq 0\), then \( \bz(t, \theta) \) is not an umbilic point  of the helicoidal surface $\bz$ for values of $c$ close to $0$ and of $t$ close to $t_0$.
\end{rem}

The following result shows that, under suitable geometric conditions, a singularity of the parallel curve persists in the one-parameter deformation associated to the helicoidal perturbation.

\begin{theo}\label{theo:def-par}
	Let $\bz$ be the helicoidal surface given by \eqref{eq:heligenz},  and  let $\gamma(t)= (x(t), z(t))$ denote its profile curve. Let $t_0 \in I$. Assume that at $t_0$,  the curve $\gamma$ is regular, has no vertex and $x(t_0)\neq 0$. Let $\gamma_{\lambda,c}$ be the 1-parameter deformation  of $\gamma_{\varepsilon\lambda}$ given in Theorem \ref{theo:paralHelic}, and let suppose that  $\gamma_{\varepsilon\lambda}$   is singular at $t_0$. Furthermore, assume that, when $c=0$, the point $\bz(t_0,\theta)$  is not  an umbilic of $\bz$.  Then  there exist neighborhoods $J$ of $0$ and  $I_0 \subset I$ of $t_0$ such that $\gamma_{\lambda,c}$ is singular at some $t \in I_0$ for all $c \in J$. 
\end{theo}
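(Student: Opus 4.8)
The plan is to show that the singularity of the parallel curve $\gamma_{\varepsilon\lambda}$ at $t_0$ (which occurs when $c=0$) persists for small $c$ by an application of the implicit function theorem to a suitably normalized "singularity function" associated with $\gamma_{\lambda,c}$. First I would fix $\lambda = -\varepsilon\beta(t_0)/\ell(t_0)$, the value forced by the hypothesis that $\gamma_{\varepsilon\lambda}$ is singular at $t_0$ (using the last formula recalled in Section~\ref{sec:leg-curves}, valid since $\gamma$ is regular at $t_0$). With this $\lambda$ fixed, write the deformed curve as in \eqref{eq:gammalambdac}, $\gamma_{\lambda,c}(t) = (X(t,c), Z(t,c))$ with $x_1,x_2,x_3$ as in \eqref{eq:zl}; note $x_1(t,c)$ depends on $c$ through $\xi_\bz$, and by Remark~\ref{rem:x1naoZero} the non-umbilic hypothesis guarantees $x_1(t_0,0)\neq 0$, so by continuity $x_1(t,c)\neq 0$ on a neighborhood of $(t_0,0)$ and the parametrization \eqref{eq:gammalambdac} is legitimate there. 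Define $\Phi(t,c) = \dot X(t,c)\,\dot Z'(t,c) - \text{(the scalar whose vanishing detects a singular point)}$; concretely, since $\gamma_{\lambda,c}$ is a frontal with unit normal obtained by rotating its velocity, the cleanest choice is $\Phi(t,c) := \det\!\big(\ddot{?}\big)$ — better, take $\Phi(t,c)$ to be the coefficient $\tilde\beta(t,c)$ in $\dot\gamma_{\lambda,c}(t) = \tilde\beta(t,c)\,\tilde\mu(t,c)$, i.e. $\Phi(t,c) = |\dot\gamma_{\lambda,c}(t)|$ up to sign, or equivalently any smooth function of $t,c$ vanishing exactly at singular points of $t\mapsto\gamma_{\lambda,c}(t)$ and depending smoothly on $(t,c)$.

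The key step is then: (i) check $\Phi(t_0,0)=0$, which is exactly the hypothesis that $\gamma_{\varepsilon\lambda}=\gamma_{\lambda,0}$ is singular at $t_0$; and (ii) check $\partial\Phi/\partial t\,(t_0,0)\neq 0$, which is where the no-vertex hypothesis enters. Indeed, when $c=0$ we computed in Remark~\ref{rem:x1naoZero} that $\dot\gamma_{\varepsilon\lambda}(t) = (\beta(t)+\varepsilon\lambda\ell(t))(-\sin\varphi(t),\cos\varphi(t))$, so $\Phi(t,0) = \pm(\beta(t)+\varepsilon\lambda\ell(t))$ and $\partial_t\Phi(t_0,0) = \pm(\dot\beta(t_0)+\varepsilon\lambda\dot\ell(t_0))$; one must show this is nonzero. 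The condition "$\gamma$ has no vertex at $t_0$'' should be precisely the statement that $\dot\kappa(t_0)\neq 0$, and translating through $\ell = |\beta|\kappa$ together with $\lambda = -\varepsilon\beta(t_0)/\ell(t_0)$, the quantity $\dot\beta(t_0)+\varepsilon\lambda\dot\ell(t_0)$ should be a nonzero multiple of $\dot\kappa(t_0)$ — this is the computation to carry out carefully. (This also matches the classical picture: for a regular plane curve, the evolute/parallel-offset singularity is a cusp exactly away from vertices, and a cusp is a non-degenerate singular point of the frontal, meaning $d\Phi\neq 0$.) Granting (i) and (ii), the implicit function theorem yields a smooth function $c\mapsto t(c)$ defined on a neighborhood $J$ of $0$, with $t(0)=t_0$ and $\Phi(t(c),c)=0$, and shrinking so that $t(c)$ stays in a neighborhood $I_0\subset I$ of $t_0$ gives the claim: $\gamma_{\lambda,c}$ is singular at $t(c)\in I_0$ for every $c\in J$.

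The main obstacle I anticipate is step (ii): producing a clean, correct expression for $\partial_t\Phi(t_0,0)$ and verifying that "$\gamma$ has no vertex at $t_0$'' is exactly what makes it nonvanishing. One has to be careful about which curvature is meant (the curvature $\kappa$ of the regular curve $\gamma$ near $t_0$ versus the Legendre curvature pair $(\ell,\beta)$), about the sign $\varepsilon = \sgn(x(t))$ being locally constant near $t_0$ (which holds since $x(t_0)\neq 0$), and about differentiating the relation $\ell = |\beta|\kappa$ at a point where $\beta(t_0)\neq 0$ may or may not hold — if $\beta(t_0)=0$ then $\gamma$ is singular at $t_0$, contradicting the hypothesis, so in fact $\beta(t_0)\neq 0$ and $\varepsilon\lambda = -\beta(t_0)/\ell(t_0)$ is well-defined with $\ell(t_0)\neq 0$ (as $\kappa(t_0)\neq0$ is assumed throughout). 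A secondary, more routine obstacle is confirming that $\Phi$, built from \eqref{eq:gammalambdac} and \eqref{eq:zl}, is genuinely smooth in $(t,c)$ near $(t_0,0)$; this is where $x_1(t,c)\neq 0$ (Remark~\ref{rem:x1naoZero}) and $\xi_\bz(t)\neq 0$ (standing assumption, automatic when $x(t_0)\neq 0$) are used to avoid the $\arctan$ and square-root singularities.
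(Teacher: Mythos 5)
Your overall skeleton is the same as the paper's: fix $\lambda=-\varepsilon\beta(t_0)/\ell(t_0)$, use the non-umbilic hypothesis through Remark~\ref{rem:x1naoZero} to get $x_1(t_0)\neq 0$ so that \eqref{eq:gammalambdac} is a valid local parametrization, reduce the persistence question to an implicit function theorem in $t$, and let the no-vertex hypothesis provide the nonvanishing $t$-derivative (your slice computation $\dot\beta(t_0)+\varepsilon\lambda\dot\ell(t_0)=[\dot\beta\ell-\beta\dot\ell](t_0)/\ell(t_0)$ is correct and matches the paper's derivative up to nonvanishing factors). However, there is a genuine gap exactly at the step you flag as uncertain: you never actually produce the scalar function $\Phi(t,c)$, jointly smooth near $(t_0,0)$, whose zeros are precisely the singular parameters of $\gamma_{\lambda,c}$ for all small $c$. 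A singular point of $\gamma_{\lambda,c}$ is the simultaneous vanishing of the two components of $\dot\gamma_{\lambda,c}$, so a reduction to one smooth equation must be proved, not postulated. Your candidates do not work as stated: $|\dot\gamma_{\lambda,c}(t)|$ is not smooth at its zeros, and the frontal coefficient $\tilde\beta(t,c)$ presupposes a family of unit normals $\tilde\nu(t,c)$ along the deformed curves that is smooth in both variables; for $c\neq 0$ such a family is not inherited in any obvious way from the surface normal $\bn^\bz$ (the plane curve comes from the reparametrization of Lemma~\ref{lem:espacialtoplane}, and the surface normal at the relevant points does not lie in the $xz$-plane), and constructing it amounts to the very factorization that is missing. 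Without a jointly smooth $\Phi$ on a two-dimensional neighborhood of $(t_0,0)$, the implicit function theorem cannot be applied, and computing $\partial_t\Phi$ only along the slice $c=0$ does not substitute for this.

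The paper closes this gap by brute force: writing out the two equations $\dot X=\dot Z=0$ from \eqref{eq:gammalambdac} and \eqref{eq:zl}, it shows by direct computation that, where $x(t)\neq 0$, the system is equivalent to $x\,\Phi(c,t)\sin\varphi(t)=0$ and $x\,\Phi(c,t)\cos\varphi(t)=0$ with the single explicit function $\Phi$ of \eqref{eq:phi}, hence to $\Phi(c,t)=0$; it then computes $\frac{\partial\Phi}{\partial t}(0,t_0)$ explicitly and finds it equal to a nonzero multiple of $\bigl[x(t_0)\ell(t_0)-\beta(t_0)\cos\varphi(t_0)\bigr]\bigl[\dot\beta(t_0)\ell(t_0)-\beta(t_0)\dot\ell(t_0)\bigr]$, nonzero by the non-umbilic and no-vertex hypotheses, and only then invokes the implicit function theorem. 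So to complete your argument you must carry out this explicit reduction (or an equivalent one); everything else in your plan then goes through as in the paper.
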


	\begin{proof}
		Since the point $\bz(t_0,\theta)$ is not an umbilic of $\bz$ when $c = 0$, then, by Remark~\ref{rem:x1naoZero}, $x_1(t_0) \neq 0$. Consequently, $\gamma_{\lambda,c}$ is given by \eqref{eq:gammalambdac}. We note that $\gamma_{\lambda,c}$ is singular at $t$ if and only if $\lambda$, $c$, and $t$ satisfy the following system:
		\[
		\left\{
		\begin{aligned}
			x_1 \dot{x}_1 + x_2 \dot{x}_2 &= 0 \\
			(x_1^2 + x_2^2)\dot{x}_3 - c(x_1 \dot{x}_2 - \dot{x}_1 x_2) &= 0,
		\end{aligned}
		\right.
		\]
		where $x_1$, $x_2$, and $x_3$ are given in \eqref{eq:zl}, and we omit the variable $t$.
		
		By isolating $\dot{x}_1$ in the first equation and substituting it into the second, the system above is equivalent to:
		\begin{equation}\label{eq:siteq}
			\left\{
			\begin{aligned}
				x_1 \dot{x}_1 + x_2 \dot{x}_2 &= 0 \\
				x_1 \dot{x}_3 - c\, \dot{x}_2 &= 0.
			\end{aligned}
			\right.
		\end{equation}
		
		From direct calculations, we conclude that the system above is equivalent to:
		\[
		\left\{
		\begin{aligned}
			x(t)\, \Phi(c,t) \sin\varphi(t) &= 0 \\
			x(t)\, \Phi(c,t) \cos\varphi(t) &= 0,
		\end{aligned}
		\right.
		\]
		where
		\begin{multline}\label{eq:phi}
			\Phi(c,t) = \beta (t) [c^2(c^2 - \lambda^2)\sin^4\varphi(t) + (2\xi_\bz^2(t) - x^2(t))(x^2(t) + \lambda \xi_\bz(t) \cos\varphi(t))] +\\
			\lambda x(t) \ell(t) [(c^2 + x^2(t))\xi_\bz (t)+ \lambda x^2(t) \cos\varphi(t)].
		\end{multline}
		
		Since $x(t_0)\neq 0$, there exists a neighborhood $\bar{I} \subset I$ of $t_0$ such that $x(t) \neq 0$ for all $t \in \bar{I}$. Thus, $\gamma_{\lambda,c}$ is singular at $t \in \bar{I}$ if and only if $\Phi(c,t) = 0$.
		
		Since $\dot{\gamma}_{\varepsilon\lambda}(t_0) = 0$, then
		\(
		\lambda = -\varepsilon \frac{\beta(t_0)}{\ell(t_0)}
		\) (see Remark \ref{rem:x1naoZero}).
		Therefore,
		\[
		\frac{\partial \Phi}{\partial t}(0,t_0) = \frac{x^3(t_0)}{\ell^2(t_0)} 
		\big[x(t_0)\ell(t_0) - \beta(t_0)\cos\varphi(t_0)\big] 
		\big[\dot{\beta}(t_0)\ell(t_0) - \beta(t_0)\dot{\ell}(t_0)\big] \neq 0,
		\]
		since $x(t_0)\ell(t_0) - \beta(t_0)\cos\varphi(t_0) \neq 0$ (as $x_1(t_0) \neq 0$, see Remark~\ref{rem:Lem}), and $\dot{\beta}(t_0) \ell(t_0) - \beta(t_0) \dot{\ell}(t_0)= 0$ if and only if $\dot{\kappa}(t_0) = 0$ (since $\kappa(t)=\ell(t)/|\beta(t)|$), that is, if and only if $\gamma$ has a vertex at $t_0$.
		
		Then, by the implicit function theorem, there exist neighborhoods $J$ of $0$ and $I_0 \subset \bar{I}$ of $t_0$, and a smooth function $t(c)$ with $t(0) = t_0$ such that $\Phi(c,t(c)) = 0$ for all $c \in J$ and $t(c) \in I_0$. In other words, for each $c \in J$, there exists a $t_1 \in I_0$ such that $\gamma_{\lambda,c}$ is singular at $t_1$.
	\end{proof}

We illustrate this theorem with the following example:

\begin{ex}
	Let \(\gamma(t) = (t+2, t^2/2)\) and \(\lambda = -2\sqrt{2}\). Then \(\dot{\gamma_\lambda}(t) = 0\) for \(t = \pm 1\), and the deformation \(\gamma_{\lambda,c}\) is depicted in Figure \ref{fig:ex_par} by the curves with \(c = 0, 1,\) and \(1.5\).
\end{ex}

\begin{figure}[h]
	\centering 
	\includegraphics[width=6cm]{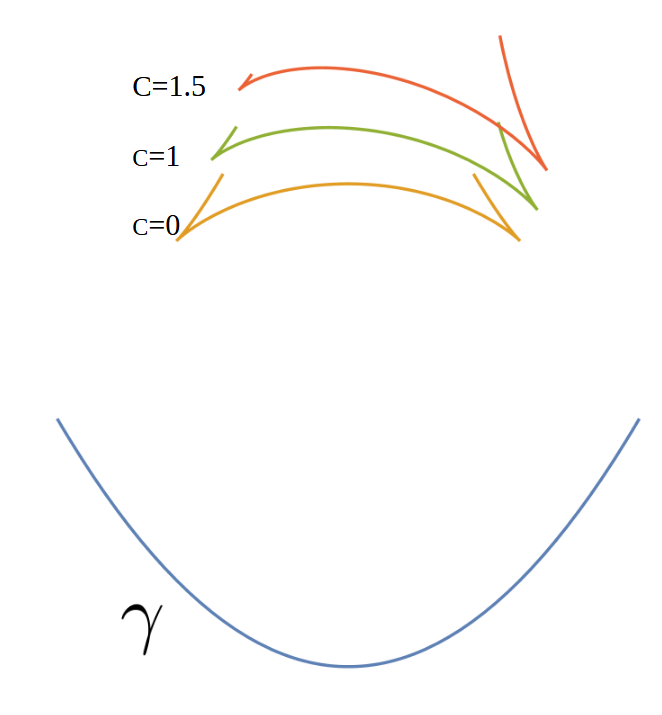}
	\caption{Deformation  $\gamma_{\lambda,c}$, for $\gamma=(t+2,\frac{t^2}{2})$ and $\lambda=-2\sqrt{2}$.}
	\label{fig:ex_par}
\end{figure}

\section{Focal surfaces}
We now turn our attention to the   focal surfaces associated with a helicoidal surface, showing that it occors a different situation if compared with what happens  for surfaces of revolution generated by  a frontal(see Proposition 3.13  of  \cite{TT}). 

As before, we focus on helicoidal surfaces generated by a curve around the $z$-axis. In this setting, and in accordance with the definition of the evolute of a curve, we assume that the profile curve is a front. Let \((\gamma, \nu): I \to \mathbb{R}^2 \times S^1\) be a Legendre immersion with curvature \((\ell, \beta)\), where \(\gamma(t) = (x(t), z(t))\), and let \(\bz\) denote the helicoidal surface generated by \(\gamma\) around the $z$-axis given by \eqref{eq:heligenz}. We further suppose that \(\xi_\bz(t) \neq 0\) for all \(t \in I\).  Recall that a focal surface of \eqref{eq:heligenz}  is given by
$$\mathcal{E}(\bz)(t,\theta) = \textbf{z}(t,\theta) + \lambda(t,\theta)\,\bn^\bz(t,\theta),$$
where $\bn^\bz$ is given by \eqref{eq:nuz} and   \(\lambda(t,\theta)\) is a solution of \eqref{eq:solutionFS}. Since $J_F^\bz,  K^\bz_F$ and  $H^\bz_F$ are functions depending only on $t$ (as seen in  $\eqref{eq:CFz}$), it follows that  $\lambda$ is a function of $t$ alone. Thus,
\[
\mathcal{E}(\bz)(t,\theta) = (\bar{x}(t)\cos\theta - \bar{y}(t)\sin\theta, \bar{x}(t)\sin\theta + \bar{y}(t)\cos\theta, c\,\theta + \bar{z}(t)),
\]
where 
\begin{equation}\label{eq:xi}
	\bar{x}(t)= x(t)(1+ \frac{\lambda(t) \cos\varphi(t)}{\xi_\bz(t)}),\ \ \ \bar{y}(t)=-\dfrac{c\, \lambda(t)\,\sin\varphi(t)}{\xi_\bz(t)}, \ \ \bar{z}(t) =z(t) +\dfrac{\lambda(t)\, x(t)\sin\varphi(t)}{\xi_\bz(t)}.
\end{equation}

These identities coincide with those in \eqref{eq:zl}, except for the key distinction that, in this context, \(\lambda\) is not a constant but a function of \(t\). 
This analysis reveals that the focal surfaces \(\mathcal{E}(\bz)\) preserve the  helicoidal structure of the original surface, with their profile curves described by a space curve.
This geometric property can be formalized in the following proposition:

\begin{prop}\label{prop:focal:gen}
	Let $\bz: I \times \R \to \mathbb{R}^3$ be a helicoidal surface around the $z$-axis given by \eqref{eq:heligenz}  generated by a Legendre immersion $(\gamma, \nu)$, with $\gamma(t) = (x(t), z(t))$, and suppose that $\xi_\bz(t) \neq 0$ for all $t \in I$. Then, the focal surface $\mathcal{E}(\bz)$ associated to $\bz$ is also a helicoidal surface about the $z$-axis, whose profile curve is the space curve  $(\bar{x}(t), \bar{y}(t), \bar{z}(t))$ given by \eqref{eq:xi}.
\end{prop}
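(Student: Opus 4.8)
The plan is essentially to reuse the computation already displayed just before the statement. Indeed, the excerpt shows by direct substitution of \eqref{eq:nuz} and \eqref{eq:heligenz} into $\mathcal{E}(\bz)(t,\theta) = \bz(t,\theta) + \lambda(t,\theta)\bn^\bz(t,\theta)$ that
\[
\mathcal{E}(\bz)(t,\theta) = (\bar{x}(t)\cos\theta - \bar{y}(t)\sin\theta,\ \bar{x}(t)\sin\theta + \bar{y}(t)\cos\theta,\ c\,\theta + \bar{z}(t)),
\]
with $\bar{x},\bar{y},\bar{z}$ as in \eqref{eq:xi}. The one point that makes this legitimate, and which I would state first, is that $\lambda$ depends only on $t$: by \eqref{eq:CFz} the functions $J_F^\bz$, $K_F^\bz$, $H_F^\bz$ are functions of $t$ alone (independent of $\theta$), so any solution $\lambda$ of the quadratic \eqref{eq:solutionFS} $K_F^\bz\lambda^2 - 2H_F^\bz\lambda + J_F^\bz = 0$ can be chosen smoothly as a function of $t$ (on the locus where such a smooth branch exists). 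This is exactly what reduces the focal surface to the form \eqref{eq:sh:espacial}.

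Next I would simply observe that the resulting parametrization is literally of the form \eqref{eq:sh:espacial}, i.e. $\bar\bz(t,\theta)$ with $(x_1,x_2,x_3) = (\bar{x},\bar{y},\bar{z})$ and slant $c$. By definition, this is a helicoidal surface about the $z$-axis generated by the space curve $(\bar{x}(t),\bar{y}(t),\bar{z}(t))$. If one wishes to present the profile curve as a genuine \emph{plane} curve, one invokes Lemma~\ref{lem:espacialtoplane}, which guarantees that the helicoidal surface generated by this space curve agrees, after a local change of parameter, with one generated by a plane curve $(X(t),0,Z(t))$ given by the formulas in Remark~\ref{rem:XZ}; but for the statement as phrased it suffices to exhibit the space profile curve $(\bar x,\bar y,\bar z)$.

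In short, the proof is: (i) note $\theta$-independence of $C_F^\bz$ hence of $\lambda$; (ii) substitute \eqref{eq:nuz} and \eqref{eq:heligenz} into the defining formula for $\mathcal{E}(\bz)$ and collect terms, obtaining precisely the shape \eqref{eq:sh:espacial} with components \eqref{eq:xi}; (iii) conclude that $\mathcal{E}(\bz)$ is helicoidal about the $z$-axis with profile curve $(\bar x,\bar y,\bar z)$, optionally citing Lemma~\ref{lem:espacialtoplane} to replace it by a planar profile. There is no real obstacle here beyond the bookkeeping of the trigonometric identities in step (ii); the only conceptual subtlety worth spelling out is the reduction from a $(t,\theta)$-dependent $\lambda$ to a $t$-dependent one, which is precisely where the structural fact $C_F^\bz = C_F^\bz(t)$ from \eqref{eq:CFz} is used. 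Since this very computation was already carried out in the paragraph preceding the proposition, the proof can legitimately be a one-line reference back to that display together with the remark on $\lambda$.

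\begin{proof}
By \eqref{eq:CFz}, the curvature $C_F^\bz = (J_F^\bz, K_F^\bz, H_F^\bz)$ of the framed surface $(\bz,\bn^\bz,\bs^\bz)$ depends only on $t$. Hence the quadratic equation \eqref{eq:solutionFS} defining the focal parameter, $K_F^\bz(t)\lambda^2 - 2H_F^\bz(t)\lambda + J_F^\bz(t) = 0$, has coefficients depending only on $t$, so a solution $\lambda$ can be taken as a smooth function $\lambda(t)$ of $t$ alone. Substituting $\bn^\bz$ from \eqref{eq:nuz} and $\bz$ from \eqref{eq:heligenz} into
$$\mathcal{E}(\bz)(t,\theta) = \bz(t,\theta) + \lambda(t)\,\bn^\bz(t,\theta)$$
and collecting the terms in $\cos\theta$ and $\sin\theta$, a direct computation gives
$$\mathcal{E}(\bz)(t,\theta) = (\bar{x}(t)\cos\theta - \bar{y}(t)\sin\theta,\ \bar{x}(t)\sin\theta + \bar{y}(t)\cos\theta,\ c\,\theta + \bar{z}(t)),$$
where $\bar{x}, \bar{y}, \bar{z}$ are given by \eqref{eq:xi}. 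This is exactly a parametrization of the form \eqref{eq:sh:espacial} with $(x_1, x_2, x_3) = (\bar{x}, \bar{y}, \bar{z})$ and slant $c$. Therefore $\mathcal{E}(\bz)$ is a helicoidal surface about the $z$-axis generated by the space curve $(\bar{x}(t), \bar{y}(t), \bar{z}(t))$.
\end{proof}
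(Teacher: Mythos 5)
Your proof is correct and follows essentially the same route as the paper, whose argument for this proposition is precisely the computation in the paragraph preceding the statement: the $\theta$-independence of $C_F^{\bf z}$ from \eqref{eq:CFz} forces $\lambda=\lambda(t)$, and substituting \eqref{eq:heligenz} and \eqref{eq:nuz} into $\mathcal{E}(\bz)=\bz+\lambda\,\bn^\bz$ yields exactly the form \eqref{eq:sh:espacial} with components \eqref{eq:xi}. Nothing is missing.
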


\begin{rem}\label{rem:Lem}
	Under the notation of the above proposition, let $(\ell, \beta)$ be the curvature of the Legendre curve $(\gamma, \nu)$. According to the proof of Lemma~\ref{lem:espacialtoplane}, the focal surface $\mathcal{E}(\bz)$ has the following planar profile curves:
	\begin{equation}\label{eq:alphai}
		\delta_c(t) = \left( \sgn(\bar{x}(t)) \sqrt{\bar{x}(t)^2 + \bar{y}(t)^2},\, \bar{z}(t) - c\, \arctan\left( \dfrac{\bar{y}(t)}{\bar{x}(t)} \right) \right),
	\end{equation}
	or
	\[
	\left( \sgn(\bar{y}(t)) \sqrt{\bar{x}(t)^2 + \bar{y}(t)^2},\, \bar{z}(t) + c\, \arctan\left( \dfrac{\bar{x}(t)}{\bar{y}(t)} \right) - \dfrac{c\pi}{2} \right),
	\]
	where $\bar{x}(t)$, $\bar{y}(t)$, and $\bar{z}(t)$ are given in \eqref{eq:xi}.
	
	When \(c = 0\), the values of \(\lambda\) that satisfy \eqref{eq:solutionFS} are \(\lambda_1(t) = -\varepsilon\, \dfrac{\beta(t)}{\ell(t)}\) and \(\lambda_2(t) = -\varepsilon\, \dfrac{x(t)}{\cos \varphi(t)}\), where \(\varepsilon = \sgn(x(t))\) (note that \(\xi_\bz(t) \neq 0\) implies \(x(t) \neq 0\)).
	
	Thus, for \(\lambda = \lambda_1\), the curve \(\delta_0(t)\) coincides with the evolute \(\mathcal{E}_\gamma(t)\) of \(\gamma\). Consequently, \(\delta_c(t)\) defines a one-parameter deformation of the evolute of \(\gamma\).
	
	For \(\lambda = \lambda_2\), the second planar profile curve given above corresponds to a deformation of a curve lying on the axis of revolution, which is not of interest in our context.
\end{rem}

\begin{ex}\label{ex:focalcirc}
	Let \(\bz\) be the helicoidal surface with profile curve given by $\gamma(t)=(\sin t ,-\cos t + 2)$ and with slant \(c = 2\) as illustrated in Figure \ref{fig:focalcirc}, left. By the proof of Lemma \ref{lem:espacialtoplane}, we determine  the plane profile  curve associated with the focal surfaces $\mathcal{E}(\bz)$ of \(\bz\). One of them  is represented by the green quadricusp shown in Figure \ref{fig:focalcirc}, rigth. The blue curve represents \(\gamma\). 
\end{ex}

\begin{figure}[h]
	\centering
	\begin{subfigure}{0.4\textwidth}
		\includegraphics[width=\textwidth]{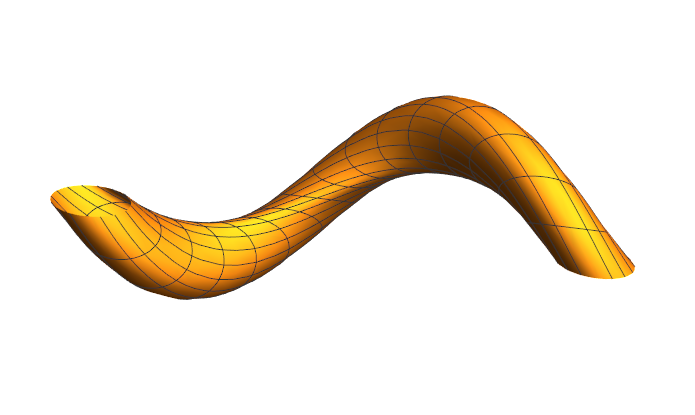}

	\end{subfigure}
	\hfill
	\begin{subfigure}{0.4\textwidth}
		\includegraphics[width=\textwidth]{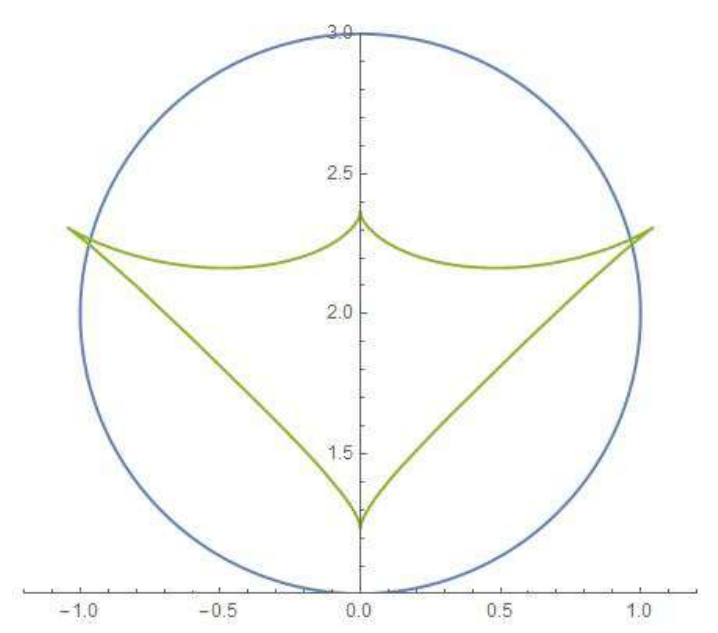}
		\label{fig:second}
	\end{subfigure}	
\caption{Figures of the Example \ref{ex:focalcirc}.}
		\label{fig:focalcirc}
\end{figure}

In the following, we shall consider $\gamma$ and $\mathcal{E}(\bz)$ locally at $t_0$ and assume that both of them do not intersect the axis of $\bz$ ($x(t_0)\neq0$ and $\bar{x}(t_0)\neq0$). 
 We aim to study the behavior of 
the deformation  $\delta_c(t)$ of the evolute $\mathcal{E}_\gamma(t)=\delta_0(t)$ of $\gamma$.

\begin{rem}
	Assume $c=0$ and $x(t_0)\neq 0$. Let $\bar{x}(t)$ be given by \eqref{eq:xi}. Then \(\bar{x}(t_0)\neq0\) if and only if $\bz(t_0,\theta)$ is not an umbilic point of $\bz$ (see Remark \ref{rem:x1naoZero}). 
\end{rem}

\begin{ex}
	Let \(\gamma\) be the curve from Example \ref{ex:focalcirc}. Its evolute is an isolated point, which is deformed by \(\delta(c,t)\) into a quadricusp, as shown in Figure \ref{fig:focalcirc}.
\end{ex}

As we did for the deformation \(\gamma_{\lambda,c}\) of the parallel curve \(\gamma_\lambda\) of the profile curve \(\gamma\), we observe that the deformation \(\delta_c(t)\) is also not a versal deformation. Similarly to Proposition~\ref{lem:c=-cParalel}, we have the following result:

\begin{prop}\label{lem:c=-c}
	The deformation in \eqref{eq:alphai} of $\gamma$ depends only on the absolute value of the parameter \(c\), up to isometry.
\end{prop}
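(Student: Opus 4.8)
The plan is to mirror the proof of Proposition~\ref{lem:c=-cParalel}, since the deformation $\delta_c(t)$ in \eqref{eq:alphai} is built from the very same expressions $\bar{x},\bar{y},\bar{z}$ given in \eqref{eq:xi} that play the role of $x_1,x_2,x_3$ in the parallel case, the only difference being that $\lambda$ is now a function of $t$ rather than a constant. First I would examine how $\bar{x}$, $\bar{y}$ and $\bar{z}$ depend on the slant parameter $c$. Since $\lambda(t)$ is a solution of \eqref{eq:solutionFS} and the coefficients $J_F^\bz, K_F^\bz, H_F^\bz$ in \eqref{eq:CFz} involve $c$ only through even powers (recall $\xi_\bz(t)^2 = c^2\sin^2\varphi(t) + x^2(t)$ and the terms $c^2\beta\sin^4\varphi$, $x\ell(c^2+x^2)$, etc.), the function $\lambda(t)$ depends on $c$ only through $c^2$. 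Consequently $\bar{x}(t) = x(t)\bigl(1 + \lambda(t)\cos\varphi(t)/\xi_\bz(t)\bigr)$ and $\bar{z}(t) = z(t) + \lambda(t)x(t)\sin\varphi(t)/\xi_\bz(t)$ depend only on $|c|$, whereas $\bar{y}(t) = -c\,\lambda(t)\sin\varphi(t)/\xi_\bz(t)$ changes sign under $c \mapsto -c$.

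Next I would substitute these observations into \eqref{eq:alphai}. The first coordinate of $\delta_c$, namely $\sgn(\bar{x}(t))\sqrt{\bar{x}(t)^2 + \bar{y}(t)^2}$, is invariant under $c \mapsto -c$ because $\bar{x}$ is unchanged and $\bar{y}$ appears only squared. For the second coordinate, $\bar{z}(t) - c\,\arctan(\bar{y}(t)/\bar{x}(t))$, replacing $c$ by $-c$ replaces $\bar{y}$ by $-\bar{y}$, so the product $c\,\arctan(\bar{y}/\bar{x})$ is unchanged since both factors flip sign (using that $\arctan$ is odd); hence the second coordinate is also invariant. Therefore $\delta_{-c}(t) = \delta_c(t)$ exactly, and in fact the two are not merely congruent but literally equal as plane curves. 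I would also note the analogous computation for the alternative planar profile curve listed in Remark~\ref{rem:Lem} (the one with $\sgn(\bar{y})$ and $\arctan(\bar{x}/\bar{y})$): there the first coordinate picks up an overall sign $\sgn(\bar{y}) \mapsto -\sgn(\bar{y})$ and the term $\arctan(\bar{x}/\bar{y}) - \pi/2$ is affected, so $\delta_{-c}$ is obtained from $\delta_c$ by the isometry $(x,z)\mapsto(-x, z + c\pi)$, exactly as already recorded in Remark~\ref{rem:XZ}; this accounts for the phrase ``up to isometry'' in the statement.

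The argument is essentially a symmetry bookkeeping exercise, so there is no serious obstacle; the one point requiring a little care is justifying rigorously that $\lambda(t)$ depends on $c$ only through $c^2$. This is immediate from inspecting \eqref{eq:CFz}: every occurrence of $c$ in $J_F^\bz$, $K_F^\bz$ and $H_F^\bz$ is as $c^2$ (directly, or inside $\xi_\bz^2 = c^2\sin^2\varphi + x^2$), so equation \eqref{eq:solutionFS} has coefficients that are even in $c$, and hence its solution set is invariant under $c \mapsto -c$; choosing the branch $\lambda_1$ or $\lambda_2$ consistently then gives a function of $|c|$. Once this is in place, the parity of each coordinate of $\delta_c$ follows directly from the oddness of $\arctan$ and the fact that $\bar{y}$ is the only $c$-odd quantity, completing the proof exactly as in Proposition~\ref{lem:c=-cParalel}.
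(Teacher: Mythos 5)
Your proposal is correct and follows essentially the same route as the paper, which proves the statement by the same parity bookkeeping used for Proposition~\ref{lem:c=-cParalel}: $\bar{x},\bar{z}$ are even in $c$, $\bar{y}$ is odd, and the oddness of $\arctan$ makes the coordinates of $\delta_c$ in \eqref{eq:alphai} invariant (up to the isometry arising from the alternative profile representation). You also supply the one extra ingredient the focal case needs, namely that $\lambda_c(t)$ depends on $c$ only through $c^2$ because the coefficients $J_F^{\bz},K_F^{\bz},H_F^{\bz}$ in \eqref{eq:solutionFS} are even in $c$, which is exactly the intended justification.
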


We now investigate how a singularity of the evolute $\mathcal{E}_\gamma$ of $\gamma$ is deformed by the family $\delta_c(t)$.

\begin{theo}\label{theo:def-foc}
Let $\bz$ be the helicoidal surface given by \eqref{eq:heligenz},  and  let $\gamma(t)= (x(t), z(t))$ denote its profile curve. Let $t_0 \in I$. Assume that at $t_0$,  the curve $\gamma$ is regular, has an ordinary vertex and $x(t_0)\neq 0$. 
	Let  \(\delta_c(t)\) be the deformation of the evolute \(\mathcal{E}_\gamma \) of $\gamma$ given by \eqref{eq:alphai}.  Assume that the curvature $K_F^\bz$  given in \eqref{eq:CFz} does not vanish when  $c=0$.
	Then  there exist neighborhoods $J$ of $0$ and  $I_0 \subset I$ of $t_0$ such that $\delta_{c}$ is singular at some $t \in I_0$ for all $c \in J$.  
\end{theo}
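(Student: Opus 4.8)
The plan is to follow the scheme used for Theorem~\ref{theo:def-par}, the one new ingredient being that $\lambda$ here is the $t$–dependent solution of the quadratic \eqref{eq:solutionFS}. First I would fix the relevant branch of $\lambda$. Since $\bar x(t_0)\neq 0$ (assumed throughout this section) the point $\bz(t_0,\theta)$ is not umbilic when $c=0$ by Remark~\ref{rem:x1naoZero}; equivalently, $\lambda_1(t_0)=-\varepsilon\beta(t_0)/\ell(t_0)$ is a simple root of \eqref{eq:solutionFS} at $(t_0,0)$, and by the implicit function theorem this root extends to a smooth branch $\lambda=\lambda(t,c)$ on a neighbourhood of $(t_0,0)$, with $\lambda(t,0)=\lambda_1(t)$. (The hypothesis $K_F^\bz(t_0)\neq 0$ at $c=0$, i.e.\ $\ell(t_0)\cos\varphi(t_0)\neq 0$, guarantees that $\lambda_1$ and the whole construction make sense at $t_0$; it is used again below.) Shrinking the neighbourhood, $\sgn(\bar x(t))=\varepsilon$ is constant, so $\delta_c(t)=(X(t),Z(t))$ with $X=\varepsilon\sqrt{\bar x^{2}+\bar y^{2}}$ and $Z=\bar z-c\arctan(\bar y/\bar x)$, where $\bar x,\bar y,\bar z$ are as in \eqref{eq:xi} with $\lambda=\lambda(t,c)$.

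Next I would describe the singular set of $\delta_c$ near $t_0$. Differentiating $X^{2}=\bar x^{2}+\bar y^{2}$ and $Z$, the curve $\delta_c$ is singular at $t$ exactly when
\[
\bar x\,\dot{\bar x}+\bar y\,\dot{\bar y}=0 ,\qquad (\bar x^{2}+\bar y^{2})\,\dot{\bar z}-c\,(\bar x\,\dot{\bar y}-\bar y\,\dot{\bar x})=0 .
\]
Since $\bar x^{2}+\bar y^{2}=X^{2}\neq 0$ and $\bar x\neq 0$ near $t_0$, isolating $\dot{\bar x}$ from the first equation and substituting into the second collapses this system — exactly as \eqref{eq:siteq} was obtained from its analogue — to
\[
\bar x\,\dot{\bar x}+\bar y\,\dot{\bar y}=0 ,\qquad \bar x\,\dot{\bar z}-c\,\dot{\bar y}=0 .
\]
Substituting \eqref{eq:xi} with $\dot\varphi=\ell$ and $\lambda=\lambda(t,c)$, and carrying the extra $\dot\lambda:=\partial_t\lambda$ terms, a direct computation of the same nature as the one leading to \eqref{eq:phi} shows that the two equations become $x(t)\,\Psi(c,t)\sin\varphi(t)=0$ and $x(t)\,\Psi(c,t)\cos\varphi(t)=0$ for an explicit smooth function $\Psi$. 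Because $x(t)\neq 0$ near $t_0$ and $\sin^{2}\varphi+\cos^{2}\varphi=1$, the curve $\delta_c$ is singular at $t$ (near $t_0$) if and only if $\Psi(c,t)=0$.

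It then remains to check the hypotheses of the implicit function theorem for $\Psi$ at $(0,t_0)$. For $\Psi(0,t_0)=0$: at $c=0$ one has $\lambda(t,0)=\lambda_1(t)$, hence $\delta_0=\mathcal{E}_\gamma$ by Remark~\ref{rem:Lem}, and $\mathcal{E}_\gamma$ is singular at $t_0$ because $\gamma$ has a vertex there; so $\Psi(0,t_0)=0$. For $\partial_t\Psi(0,t_0)\neq 0$: the crucial point is that $\dot\lambda(t,0)=-\varepsilon\,\tfrac{d}{dt}(\beta/\ell)(t)$ \emph{vanishes} at $t_0$ (recall $\kappa=\ell/|\beta|$ and $\dot\kappa(t_0)=0$), so the first-order contribution — the analogue of the factor $\dot\beta\ell-\beta\dot\ell$ occurring in the parallel case of Theorem~\ref{theo:def-par} — drops out; a direct computation using $\dot\lambda(t_0,0)=0$ then shows that $\partial_t\Psi(0,t_0)$ is a nonzero scalar multiple of $\ddot\lambda(t_0,0)=-\varepsilon\,\tfrac{d^{2}}{dt^{2}}(\beta/\ell)(t_0)$. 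Here the scalar is nonvanishing because $x(t_0)\neq 0$ and $K_F^\bz(t_0)=-\varepsilon\,\ell(t_0)\cos\varphi(t_0)\neq 0$ at $c=0$, while $\ddot\lambda(t_0,0)\neq 0$ precisely because the vertex is \emph{ordinary}; hence $\partial_t\Psi(0,t_0)\neq 0$.

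Finally, the implicit function theorem applied to $\Psi$ at $(0,t_0)$ yields neighbourhoods $J$ of $0$ and $I_0\subset I$ of $t_0$, and a smooth function $c\mapsto t(c)$ with $t(0)=t_0$, $t(c)\in I_0$ and $\Psi(c,t(c))=0$ for every $c\in J$; that is, $\delta_c$ is singular at some $t\in I_0$ for all $c\in J$, as claimed. I expect the main obstacle to be the work hidden in the middle step: distilling the two-equation singular-locus system into the single scalar equation $\Psi=0$ while $\lambda(t,c)$ is available only implicitly through \eqref{eq:solutionFS}, and then establishing $\partial_t\Psi(0,t_0)\neq 0$, which — in contrast with the parallel case — is a genuinely second-order transversality condition (since $\dot\lambda(t_0,0)=0$ at a vertex) and therefore uses both the ordinariness of the vertex and the nondegeneracy $K_F^\bz(t_0)\neq 0$.
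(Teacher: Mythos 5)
Your overall route is the same as the paper's: restrict to the branch of $\lambda$ solving \eqref{eq:solutionFS} that yields \eqref{eq:alphai}, write the singular locus of $\delta_c$ as the system $\bar{x}\dot{\bar{x}}+\bar{y}\dot{\bar{y}}=0$, $\bar{x}\dot{\bar{z}}-c\,\dot{\bar{y}}=0$, reduce it near $(0,t_0)$ to one scalar equation, and apply the implicit function theorem, with the transversality being a genuinely second-order condition: $\dot{\lambda}_0(t_0)=0$ because $t_0$ is a vertex, and the relevant $t$-derivative is a nonzero multiple of $\ddot{\lambda}_0(t_0)$, nonzero exactly because the vertex is ordinary; the hypotheses $x(t_0)\neq 0$, $\bar{x}(t_0)\neq 0$ and $K_F^\bz(t_0)\neq 0$ at $c=0$ enter just as in the paper. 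The one inaccuracy is your claimed shape of the reduced system: for $c\neq 0$ the two equations do \emph{not} factor as $x\,\Psi\sin\varphi=0$ and $x\,\Psi\cos\varphi=0$ with a common scalar $\Psi$, as in the parallel case. What the computation actually gives is $\Phi(c,t)+\dot{\lambda}_c(t)\,\Delta_1(c,t)=0$ and $\Phi(c,t)+\dot{\lambda}_c(t)\,\Delta_2(c,t)=0$ with two \emph{different} cofactors $\Delta_1,\Delta_2$, where $\Phi$ is the same function \eqref{eq:phi} as in the parallel case; the reduction to a single equation comes from the observation that $\Phi\equiv 0$ identically, since $\Phi=-\bigl[K_F^\bz\lambda_c^2-2H_F^\bz\lambda_c+J_F^\bz\bigr]\xi_\bz^3$ and $\lambda_c$ solves \eqref{eq:solutionFS}, together with the fact that $\Delta_1(0,t_0)$ and $\Delta_2(0,t_0)$ are nonzero multiples of $\cos\varphi(t_0)\,x(t_0)^2\bar{x}(t_0)$ and $\sin\varphi(t_0)\,x(t_0)^3\bar{x}(t_0)$, which cannot both vanish since $x(t_0)\bar{x}(t_0)\neq 0$. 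Hence near $(0,t_0)$ the singular set is exactly $\{\dot{\lambda}_c(t)=0\}$, i.e.\ your $\Psi$ is in effect $\dot{\lambda}_c$ itself, and from that point on your argument coincides with the paper's; so this is a correctable misstatement of the intermediate computation rather than a gap in strategy, but as written the factorization claim would not survive verification.
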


\begin{proof}
	Similarly to the proof of Theorem \ref{theo:def-par}, the curve $\delta_c$ is singular at $t_0$ if and only if the following identities are satisfied at $t_0$:
	
	\begin{equation*}\label{eq:defev}
		\left\{\begin{array}{lr}
			\bar{x} \dot{\bar{x}}+\bar{y}\dot{\bar{y}}=0\, \\
			\dot{\bar{z}}\bar{x}-c\,\dot{\bar{y}}=0.\\	
		\end{array}\right.
	\end{equation*}
	
	We will prove the existence of a smooth function $t=t(c)$, defined in a neighborhood of $c=0$, such that $t(0)=t_0$ and satisfying the above system. This guarantees that, for each small value of $c$, the deformed curve $\delta_c(t)$ continues to exhibit a singularity at some point near $t_0$, confirming the persistence of the singularity under deformation.
	
	From direct calculations, we conclude that the above system is equivalent to: 
	
	\begin{equation*}\label{eq:defev2}
		\left\{\begin{array}{ll}
			\Phi(c,t) +  \dot{\lambda}_c(t) \Delta_1(c,t)
			=0\, \\
			\Phi(c,t) +  \dot{\lambda}_c(t)	\Delta_2(c,t)=0,\\	
		\end{array}\right.
	\end{equation*}
	where \(\Delta_1 = \left( \lambda \, \xi_\bz^2 + x^2 \left( -\lambda \sin^2 \varphi + \xi_\bz \cos \varphi \right) \right)\), \(\Delta_2 = \sin \varphi \left( c^2 \xi_\bz + x^2 \left( \lambda \cos \varphi + \xi_\bz \right) \right)\), and $\Phi$ is given by \eqref{eq:phi}. Here, we denote $\lambda$ by $\lambda_c$ to emphasize its dependence on the parameter $c$ as well. With some calculations, we can simplify $\Phi$ as
	\begin{eqnarray}
		\Phi(c,t) = - [ K_F^\bz(t)\lambda_c^2(t) - 2H_F^\bz(t)\lambda_c(t) + J_F^\bz(t)]\, \xi_\bz^3(t)\,,
	\end{eqnarray}

	Since  \(\lambda_c(t)\) satisfies \eqref{eq:solutionFS}, then $\Phi(c,t)=0$, and the zeros of \eqref{eq:defev2} are given by the union of \(\dot{\lambda}_c(t) =0\) with the set \(\Delta_1^{-1}(0) \cap \Delta_2^{-1}(0)\).
	However, we have 
	$$
	\Delta_1(0,t_0)=\cos\varphi(t_0)x(t_0)^2(x(t_0)+\lambda(t_0)\cos\varphi(t_0))=\cos\varphi(t_0)x(t_0)^2\bar{x}(t_0)$$ and
	$$\Delta_2(0,t_0)=\sin\varphi(t_0)x(t_0)^3(x(t_0)+\lambda(t_0)\cos\varphi(t_0))=\sin\varphi(t_0)x(t_0)^3\bar{x}(t_0).
	$$
	Thus, \(\Delta_1^{-1}(0) \cap \Delta_2^{-1}(0)\) is empty near $(0,t_0)$ by hypothesis.
	
	On the other hand, by \eqref{eq:solutionFS}, the function \(\lambda_c\) is given by \(\lambda_c^- = \frac{H_F^\bz - \sqrt{(H_F^\bz)^2 - J_F^\bz K_F^\bz}}{K_F^\bz}\) or\linebreak \(\lambda_c^+ = \frac{H_F^\bz + \sqrt{(H_F^\bz)^2 - J_F^\bz K_F^\bz}}{K_F^\bz}\). With some calculation, we obtain that $\lambda_c^{-\sgn(\bar{x}(t_0))}$ is the one that gives the deformation \eqref{eq:alphai} of $\mathcal{E}(\bz)$. For simplicity, in the following, we shall denote it only by $\lambda_c$.
	
	Thus, we obtain
	\[
	\frac{\partial}{\partial t}\dot{\lambda}_0(t_0) = \frac{-\ell^2(t_0) \ddot{\beta}(t_0) - 2 \beta(t_0) \dot{\ell}^2(t_0) + \ell(t_0) \left( 2 \dot{\beta}(t_0) \dot{\ell}(t_0) + \beta(t_0) \ddot{\ell}(t_0) \right)}{\ell^2(t_0)},
	\]
where $(\ell, \beta)$ is the curvature of  $(\gamma, \nu)$.	
	
	Since $t_0$ is a vertex of $\gamma$, then  $\dot \kappa (t_0) = 0$, that is, \(\dot{\beta}(t_0) \, \ell(t_0) - \beta(t_0) \, \dot \ell(t_0) = 0\) holds, which allows us to further simplify the identity above to:

	\[
	\Phi_t(0,t_0) = \frac{-\ell(t_0) \, \ddot{\beta}(t_0) + \beta(t_0) \, \ddot{\ell}(t_0)}{\ell^2(t_0)},
	\]
	which vanishes if and only if \(\gamma\) has a vertex of at least order two. Thus, by assumption, $\frac{\partial}{\partial t}\dot{\lambda}_0(t_0) \neq 0$, and by the implicit function theorem, there exists a unique smooth curve \((c,t(c))\) passing through the origin such that \(\dot{\lambda}_c(t(c)) = 0\) for all \(c\) near the origin.
\end{proof}

\section{Behavior of Gaussian and Mean curvatures}

This section investigates the behavior of the Gaussian and mean curvatures of helicoidal surfaces, with particular attention to whether these curvatures remain bounded or diverge near singular points. We shall use the notation introduced in the previous sections.
	Let $(\gamma, \nu): I \rightarrow \R^2 \times S^1$  be a Legendre curve with curvature $(\ell,\beta)$, and let $\bz$ denote the  helicoidal surface generated by  $\gamma(t) = (x(t),z(t))$  around the $z$-axis  given by \eqref{eq:heligenz}. 
	 Without loss of generality, we assume that $\gamma$ has an isolated singularity at the origin. This implies that $\bz$ has a curve of singular points given by $\{(0, \theta), \, \theta \in \R\}$. Since the signed area density function $\Lambda = \det(\bz_t, \bz_\theta, \bn_\bz) = -\xi_\bz \beta$ does not depend on $\theta$, we have $\Lambda_t(0, \theta) = -\xi_\bz(0) \dot\beta(0)$. Consequently, the singularities of $\bz$ are  non-degenerate if and only if $\dot \beta(0) \neq 0$. 

We express the function $\beta$ as $\beta(t) = t^m \bar{\beta}(t)$, where $\bar{\beta}(t)$ is smooth and satisfies $\bar{\beta}(0) \neq 0$. Therefore, $(0, \theta)$ is a  non-degenerate singular point of $\bz$ if and only if $m = 1$. It is worth recalling that $\gamma$ is a front at $t_0$ if and only if $(\ell, \beta) \neq (0, 0)$ at $t_0$.

We begin by analyzing the Gaussian curvature \(K\) near the singularities of \(\mathbf{z}\). The Gaussian curvature can be written as

\[
K(t)=\dfrac{K_F^\bz(t)}{J_F^\bz(t)} = -\dfrac{c^2 t^m \bar{\beta}(t) \sin^4 \varphi(t) - x^3(t) \ell(t) \cos \varphi(t)}{t^m \bar{\beta}(t) \xi_\mathbf{z}^4(t)}.
\]
It follows that \(K\) is bounded if and only if the function
\[
f(t) =  \ell(t)h(t),
\]
where $h(t)=x^3(t)\cos\varphi(t)$, satifies  \(f(t) = O(t^m)\), meaning that the quotient \(\frac{f(t)}{t^m}\) remains bounded near the origin.

We now analyze this condition by distinguishing between degenerate and non-degenerate singularities, with further subdivisions based on whether the generating curve \(\gamma\) is a front or not:

\begin{itemize}
	\item[(1)] Non-degenerate singularities (\(m = 1\)).

	\begin{itemize}
		\item \(\gamma\) is a front. Since \(\ell(0) \neq 0\), \(K\) is bounded if and only if \(h(t) = O(t)\), which occurs precisely when \(x(0) = 0\) or \(\cos \varphi(0) = 0\). In geometric terms, the curvature remains bounded near the singular curve if and only if the generating curve intersects the axis of rotation at the singularity or if its normal vector is parallel to the axis of rotation.
		\item \(\gamma\) is not a front. Here, \(\ell(0) = 0\), and therefore the term \(f(t)=0\), ensuring that \(K\) is bounded.
	\end{itemize}
	\item[(2)] Degenerate singularities (\(m > 1\)).
	\begin{itemize}
		\item \(\gamma\) is a front. Since \(\ell(0) \neq 0\), \(K\) is bounded if and only if $h(t)= O(t^m)$.
		Noting that \(\dot x(t) = -\beta(t) \sin \varphi(t) = O(t^m)\), we conclude that if \(x(0) = 0\), then \(x(t) = O(t^{m+1})\), which ensures \(K\) is bounded. On the other hand, if \(x(0) \neq 0\) and \(\cos \varphi(0) = 0\), then
		$
		\dot h(0) = -\sin \varphi(0)  x^3(0) \ell(0) \neq 0,
		$
		implying that \(h(t)\) can not satisfy \(h(t)=O(t^m)\), and hence \(K\) is unbounded. The case $x(0)\neq0$ and $\cos\varphi(0)\neq0$ implies directly that $K$ is unbounded. Therefore, in this case, the curvature is bounded if and only if \(x(0) = 0\).
		\item \(\gamma\) is not a front. In this situation, \(\ell(0) = 0\), and the condition \(K\) bounded becomes equivalent to
	$
		f(t)= O(t^m).
$

		If \(x(0) = 0\), then as before \(x(t) = O(t^{m+1})\), and \(K\) is bounded. Now suppose that \(x(0) \cos \varphi(0) \neq 0\). Then, \(K\) is bounded if and only if \(\ell(t) = O(t^m)\), which implies \(\varphi(t) = O(t^{m+1})\).
		
		If $x(0)\neq0$ and \(\cos \varphi(0) = 0\), direct computation shows that \(\varphi(t) = O(t^n)\) if and only if \(f(t) = O(t^{2n-1})\). Thus, \(K\) is bounded if and only if
		\[
		\varphi(t) = 
		\begin{cases}
			O(t^{\frac{m+1}{2}}), & \text{if } m \text{ is odd}, \\
			O(t^{\frac{m}{2}}), & \text{if } m \text{ is even}.
		\end{cases}
		\]
	\end{itemize}
\end{itemize}
The conditions under which the Gaussian curvature remains bounded are summarized in Table~\ref{tab:K}.

\begin{table}[h!]
	\centering
	\renewcommand{\arraystretch}{1.3}
	\begin{tabular}{|>{\centering\arraybackslash}m{0.18\linewidth}|
			>{\centering\arraybackslash}m{0.11\linewidth}|
			>{\centering\arraybackslash}m{0.18\linewidth}|
			>{\centering\arraybackslash}m{0.38\linewidth}|}
		\hline
		\textbf{Singularities} & \(\gamma\) & \textbf{Condition} & \textbf{Behavior} \\ \hline
		
		\multirow{4}{*}{\shortstack{Non-degenerate \\ \((m = 1)\)}} 
		& \multirow{3}{*}{Front}
		& \multirow{2}{*}{\(\cos \varphi(0)x(0) = 0\)} & \multirow{2}{*}{Bounded} \\
		&  &  &  \\ \cline{3-4}
		&  &\(\cos \varphi(0)x(0) \neq 0\)& Unbounded \\ \cline{2-4}
		& Non-front & — & Bounded \\ \hline
		
		\multirow{5}{*}{\shortstack{Degenerate \\ \((m > 1)\)}} 
		& \multirow{2}{*}{Front} 
		& \(x = 0\) & Bounded \\ \cline{3-4}
		&  & \(x \neq 0\) & Unbounded \\ \cline{2-4}
		& \multirow{3}{*}{Non-front} 
		& \(x = 0\) & Bounded \\ \cline{3-4}
		&  & \(x \cos \varphi \neq 0\) & Bounded \(\Leftrightarrow \varphi(t) = O(t^{m+1})\) \\ \cline{3-4}
		&  & \(x \neq 0\), \(\cos \varphi = 0\) & 
		\parbox[c]{\linewidth}{\centering
			\rule{0pt}{4.5ex}\raisebox{1ex}{Bounded $\Leftrightarrow$} \( \left\{
			\begin{array}{ll}
				\varphi(t) = O(t^{\frac{m+1}{2}}), & \text{if \(m\) is odd} \\[4pt]
				\varphi(t) = O(t^{\frac{m}{2}}), & \text{if \(m\) is even}
			\end{array}
			\right.\)} 
		\\ \hline
	\end{tabular}
	\caption{Summary of the behavior of the Gaussian curvature.}
	\label{tab:K}
\end{table}

Now we analyze the behavior of the mean curvature \( H \) near the singular points of a helicoidal surface. From the expression  
\[
H(t)=\dfrac{H_F^\bz(t)}{J_F^\bz(t)} = -\dfrac{t^m \bar{\beta}(t) \cos \varphi(t) \left( \xi^2(t) + c^2 \sin^2 \varphi(t) \right) + x(t) \ell(t) (x^2(t) + c^2)}{2 t^m \bar{\beta}(t) \xi^3(t)},
\]
we conclude that \( H \) is bounded if and only if \( x(t) \ell(t) (x^2(t) + c^2) = O(t^m) \). We consider the followig cases:

\begin{itemize}
	\item[(1)] Non-degenerate singularities (\( m = 1 \)):
	\begin{itemize}
		\item \(\gamma\) is a front. In this case,  \(\ell(0) \neq 0\). Thus, \( H \) is bounded if and only if \( x(0) = 0 \).
		\item \(\gamma\) is not a front. In this case, \(\ell(0) = 0\), and therefore \( H \) is bounded.
	\end{itemize}
	\item[(2)] Degenerate singularities (\( m > 1 \)):
	\begin{itemize}
		\item \(\gamma\) is a front. Then \(\ell(0) \neq 0\). In this case, \( H \) is bounded if and only if \( x(x^2 + c^2) = O(t^m) \). We conclude, in the same way for the Gaussian curvature, that \( H \) is bounded if and only if \( x(0) = 0 \).
		\item \(\gamma\) is not a front. Then \(\ell(0) = 0 \). If \( x(0) = 0 \), then \( H \) is bounded for any integer \( m \). If \( x(0) \neq 0 \), we deduce that \( H \) is bounded if and only if \( \ell(t)=\dot \varphi(t) = O(t^m) \), or equivalently, \( \varphi(t) = O(t^{m+1}) \).
	\end{itemize}
\end{itemize}

The results obtained above for the mean curvature \( H \) are summarized in Table~\ref{tab:K2}.

\begin{table}[h!]
	\centering
	\begin{tabular}{|c|c|c|c|}
		\hline
		\textbf{Singularities} & \textbf{\(\gamma\)} & \textbf{Condition} & \textbf{Behavior of \(H(t)\)} \\ \hline
		
		\multirow{4}{*}{\shortstack{Non-degenerate \\ \((m = 1)\)}} 
		& \multirow{2}{*}{Front}       & \(x(0) = 0\)   & \(H\) is bounded   \\ \cline{3-4}
		&                              & \(x(0) \neq 0\)& \(H\) is unbounded \\ \cline{2-4}
		& Not a front                  & —              & \(H\) is bounded   \\ \hline
		
		\multirow{5}{*}{\shortstack{Degenerate \\ \((m > 1)\)}} 
		& \multirow{2}{*}{Front}       & \(x(0) = 0\)   & \(H\) is bounded   \\ \cline{3-4}
		&                              & \(x(0) \neq 0\)& \(H\) is unbounded \\ \cline{2-4}
		& \multirow{2}{*}{Not a front} & \(x(0) = 0\)   & \(H\) is bounded   \\ \cline{3-4}
		&                              & \(x(0) \neq 0\)& \(H\) is bounded \(\Leftrightarrow \varphi(t) = O(t^{m+1})\) \\ \hline
		
	\end{tabular}
	\caption{Summary of the conditions for the boundedness of the mean curvature \(H\) near singular points.}
	\label{tab:K2}
\end{table}

We conclude that, when  $x(0) = 0$, the curvatures $K$ and \( H \) are always bounded near the singular points of the surface.

\begin{rem}
	When the profile curve $\gamma(t) = (x(t), z(t))$ of $\bz$ does not intersect the axis of $\bz$ ($x(t) \neq 0$) and has an isolated singularity, the surface $\bz$ is a $\sigma$-edge (see \cite{sigmaedge} for more details). In fact, the map 
	$$
	\Psi(X,Y,Z) = (X \cos(Y), X \sin(Y), Z + c\, Y)
	$$
takes the surface $(x(t), \theta, z(t))$ to the surface $\bz$, and it is a diffeomorphism since\linebreak $\det(Jac(\Psi(x(t),0,z(t))) = x(t) \neq 0$.
	
Therefore, the classical  geometric invariants for frontals, namely, the singular, normal and  cuspidal curvatures, as well as the cuspidal torsion) (see \cite{martins-saji,suy}) which have been extended to \(\sigma\)-edges in \cite{sigmaedge}, are also applicable to helicoidal surfaces.
\end{rem}

\medskip
\begin{tabular}{ll}
	\begin{tabular}{l}
		Dep. de Matem{\'a}tica, IBILCE - UNESP \\
		R. Crist{\'o}v{\~a}o Colombo, 2265,  CEP 15054-000\\
		S{\~a}o Jos{\'e} do Rio Preto, SP, Brazil\\
		E-mail: {\tt luciana.martins@unesp.br}
	\end{tabular}
	&
	\begin{tabular}{l}
		Dep. de Matem{\'a}tica, ICMC - USP \\
Caixa postal 668, CEP 13560-970\\
		S{\~a}o Carlos, SP, Brazil\\
		E-mail: {\tt samuelp.santos@hotmail.com}
	\end{tabular}
\end{tabular}


\begin{thebibliography}{99}

\bibitem{Ball} R. S. Ball, {\it A Treatise on the Theory of Screws}, reprint of the 1876 ed. Cambridge, U.K.: Cambridge University Press, 1998.

\bibitem{FuTa-lc1}   T. Fukunaga, M. Takahashi, Existence and uniqueness for Legendre curves.  \textit{J. Geom.} {\bf 104}  (2013), 297--307.

\bibitem{FuTa-lc2}   T. Fukunaga, M. Takahashi, Evolutes of fronts in the Euclidean plane.  \textit{J. Sing.} {\bf 10} (2014), 92--107.

\bibitem{FuTa-Fs} T. Fukunaga, M. Takahashi, Framed surfaces in the Euclidean plane. \textit{Bull. Braz. Math. Soc. (N.S.)} {\bf 50} (2019),  37--65.

\bibitem{Gray} A. Gray, E. Abbena, S. Salamon, \textit{Modern differential geometry of curves and surfaces with Mathematica. Third edition.}  Studies in Advanced Mathematics. Chapman and Hall/CRC, Boca Raton, FL, 2006.

\bibitem{Ha-Ho-Mo} Y. Hattori, A. Honda, T. Morimoto, Bour's Theorem for helicoidal surfaces with singularities. \textit{Diff. Geom. App.} {\bf 99} (2025),  102248.

\bibitem{martins-saji} 
	L. F. Martins, K. Saji,
Geometric invariants of cuspidal edges. 
	{\it Canad. J. Math}. {\bf 68} (2016), 445--462.

\bibitem{sigmaedge} 	
L. F. Martins, K. Saji, S. P. Santos, K. Teramoto,
{\it  Boundness of geometric invariants near a singularity which is a suspension of a singular curve},
Rev. Unión Mat. Argent {\bf 67} (2024), 475–502.

\bibitem{msst} 	
L. F. Martins, K. Saji, S. P. Santos, K. Teramoto,
Singular surfaces of revolution with prescribed unbounded mean curvature,
{\it An. Acad. Brasil. Ci\^enc.} {\bf 91} (2019), no. 3, e20170865, 10 pp.

\bibitem{Na-Sa-Shi-Ta} N. Nakatsuyama, K. Saji, R. Shimada, M. Takahashi, Singularities of helicoidal surfaces of frontals in the Euclidean space,  \textit{ArXiv preprint}, 2024. DOI: 10.48550/arXiv.2410.20085.

\bibitem{suy}	K. Saji, M. Umehara, K. Yamada,
 The geometry of fronts,
{\itshape Ann. of Math.} {\bf 169} (2009), 491--529.


\bibitem{TT}  M. Takahashi, K. Teramoto, Surfaces of revolution of frontals in the Euclidean space,  \textit{Bull. Braz. Math. Soc.},  New Series {\bf 51}  (2020), 887-914.

\bibitem{TY} M. Takahashi, H. Yu, On generalised framed surfaces in the Euclidean space. {\it AIMS Math.}  {\bf 9} (2024), 17716-17742.

\bibitem{Tera-parallel} K. Teramoto,Parallel and dual surfaces of cuspidal edges. \textit{ Differ. Geom. Appl.} {\bf 44} (2016),  52-62.

\bibitem{Tera-principal}  K. Teramoto, Principal curvatures and parallel surfaces of wave fronts. {Adv. Geom. }  {\bf 19} (2016),  541–554.

\bibitem{Tera-focal} K. Teramoto,  Focal surfaces of wave fronts in the euclidean 3-space.
 \textit{ Glasgow Math. J.}  {\bf 61} (2018), 425-440.





\end{thebibliography}
\end{document}